\def\T{\Gamma} \def\D{\Delta} \def\Th{\Theta}
\def\Ld{\Lambda} \def\E{\Sigma} \def\O{\Omega}
\def\a{\alpha} \def\b{\beta} \def\g{\gamma} \def\d{\delta} \def\e{\varepsilon}
\def\r{\rho} \def\o{\sigma} \def\t{\tau} \def\w{\omega} \def\k{\kappa}
\def\th{\theta} \def\ld{\lambda} \def\ph{\varphi} \def\z{\zeta}
\def\A{$A$~} \def\G{$G$~} \def\H{$H$~} \def\K{$K$~} \def\M{$M$~} \def\N{$N$~}
\def\P{$P$~} \def\Q{$Q$~} \def\R{$R$~} \def\V{$V$~} \def\X{$X$~} \def\Y{$Y$~}
\def\rmA{{\bf A}} \def\rmD{{\bf D}} \def\rmS{{\bf S}} \def\rmK{{\bf K}}
\def\rmM{{\bf M}} \def\Z{{\Bbb Z}} \def\GL{{\bf GL}} \def\C{Cayley}
\def\oa{\ovl A} \def\og{\ovl G} \def\oh{\ovl H} \def\ob{\ovl B} \def\oq{\ovl Q}
\def\oc{\ovl C} \def\ok{\ovl K} \def\ol{\ovl L} \def\om{\ovl M} \def\on{\ovl N}
\def\op{\ovl P} \def\oR{\ovl R} \def\os{\ovl S} \def\ot{\ovl T} \def\ou{\ovl U}
\def\ov{\ovl V} \def\ow{\ovl W} \def\ox{\ovl X} \def\oT{\ovl\T}
\def\lg{\langle} \def\rg{\rangle}
\def\di{\bigm|} \def\Di{\Bigm|} \def\nd{\mathrel{\bigm|\kern-.7em/}}
\def\Nd{\mathrel{\not\,\Bigm|}} \def\edi{\bigm|\bigm|}
\def\m{\medskip} \def\l{\noindent} \def\x{$\!\,$}  \def\J{$-\!\,$}
\def\Hom{\hbox{\rm Hom}} \def\Aut{\hbox{\rm Aut}} \def\Inn{\hbox{\rm Inn}}
\def\Syl{\hbox{\rm Syl}} \def\Sym{\hbox{\rm Sym}} \def\Alt{\hbox{\rm Alt}}
\def\Ker{\hbox{\rm Ker}} \def\fix{\hbox{\rm fix}} \def\mod{\hbox{\rm mod}}
\def\psl{{\bf P\!SL}} \def\Cay{\hbox{\rm Cay}} \def\Mult{\hbox{\rm Mult}}
\def\val{\hbox{\rm Val}} \def\Sab{\hbox{\rm Sab}} \def\supp{\hbox{\rm supp}}
\def\qed{\hfill $\Box$} \def\qqed{\qed\vspace{3truemm}}
\def\CS{\Cay(G,S)} \def\CT{\Cay(G,T)}
\def\h{\heiti\bf} \def\hs{\ziti{E}\bf} \def\st{\songti} \def\ft{\fangsong}
\def\kt{\kaishu} \def\heit{\hs\relax} \def\songt{\st\rm\relax}
\def\fangs{\ft\rm\relax} \def\kaish{\kt\rm\relax} \def\fs{\footnotesize}
\begin{document}

\newtheorem{theorem}{Theorem}[section]
\newtheorem{corollary}[theorem]{Corollary}
\newtheorem{definition}[theorem]{Definition}
\newtheorem{conjecture}[theorem]{Conjecture}
\newtheorem{question}[theorem]{Question}
\newtheorem{lemma}[theorem]{Lemma}
\newtheorem{proposition}[theorem]{Proposition}
\newtheorem{example}[theorem]{Example}
\newtheorem{problem}[theorem]{Problem}
\newenvironment{proof}{\noindent {\bf
Proof.}}{\rule{3mm}{3mm}\par\medskip}
\newcommand{\remark}{\medskip\par\noindent {\bf Remark.~~}}
\newcommand{\pp}{{\it p.}}
\newcommand{\de}{\em}

\def\dfrac{\displaystyle\frac} \def\ovl{\overline}
\def\for{\forall~} \def\exi{\exists~} \def\c{\subseteq}
\def\iif{\Longleftrightarrow} \def\Rto{\Rightarrow} \def\Lto{\Leftarrow}
\def\T{\Gamma} \def\D{\Delta} \def\Th{\Theta}
\def\Ld{\Lambda} \def\E{\Sigma} \def\O{\Omega}
\def\a{\alpha} \def\b{\beta} \def\g{\gamma} \def\d{\delta} \def\e{\varepsilon}
\def\r{\rho} \def\o{\sigma} \def\t{\tau} \def\w{\omega} \def\k{\kappa}
\def\th{\theta} \def\ld{\lambda} \def\ph{\varphi} \def\z{\zeta}
\def\A{$A$~} \def\G{$G$~} \def\H{$H$~} \def\K{$K$~} \def\M{$M$~} \def\N{$N$~}
\def\P{$P$~} \def\Q{$Q$~} \def\R{$R$~} \def\V{$V$~} \def\X{$X$~} \def\Y{$Y$~}
\def\rmA{{\bf A}} \def\rmD{{\bf D}} \def\rmS{{\bf S}} \def\rmK{{\bf K}}
\def\rmM{{\bf M}} \def\Z{{\Bbb Z}} \def\GL{{\bf GL}} \def\C{Cayley}
\def\oa{\ovl A} \def\og{\ovl G} \def\oh{\ovl H} \def\ob{\ovl B} \def\oq{\ovl Q}
\def\oc{\ovl C} \def\ok{\ovl K} \def\ol{\ovl L} \def\om{\ovl M} \def\on{\ovl N}
\def\op{\ovl P} \def\oR{\ovl R} \def\os{\ovl S} \def\ot{\ovl T} \def\ou{\ovl U}
\def\ov{\ovl V} \def\ow{\ovl W} \def\ox{\ovl X} \def\oT{\ovl\T}
\def\lg{\langle} \def\rg{\rangle}
\def\di{\bigm|} \def\Di{\Bigm|} \def\nd{\mathrel{\bigm|\kern-.7em/}}
\def\Nd{\mathrel{\not\,\Bigm|}} \def\edi{\bigm|\bigm|}
\def\m{\medskip} \def\l{\noindent} \def\x{$\!\,$}  \def\J{$-\!\,$}
\def\Hom{\hbox{\rm Hom}} \def\Aut{\hbox{\rm Aut}} \def\Inn{\hbox{\rm Inn}}
\def\Syl{\hbox{\rm Syl}} \def\Sym{\hbox{\rm Sym}} \def\Alt{\hbox{\rm Alt}}
\def\Ker{\hbox{\rm Ker}} \def\fix{\hbox{\rm fix}} \def\mod{\hbox{\rm mod}}
\def\psl{{\bf P\!SL}} \def\Cay{\hbox{\rm Cay}} \def\Mult{\hbox{\rm Mult}}
\def\val{\hbox{\rm Val}} \def\Sab{\hbox{\rm Sab}} \def\supp{\hbox{\rm supp}}
\def\qed{\hfill $\Box$} \def\qqed{\qed\vspace{3truemm}}
\def\CS{\Cay(G,S)} \def\CT{\Cay(G,T)}
\def\h{\heiti\bf} \def\hs{\ziti{E}\bf} \def\st{\songti} \def\ft{\fangsong}
\def\kt{\kaishu} \def\heit{\hs\relax} \def\songt{\st\rm\relax}
\def\fangs{\ft\rm\relax} \def\kaish{\kt\rm\relax} \def\fs{\footnotesize}

\newcommand{\JEC}{{\it Europ. J. Combinatorics},  }
\newcommand{\JCTB}{{\it J. Combin. Theory Ser. B.}, }
\newcommand{\JCT}{{\it J. Combin. Theory}, }
\newcommand{\JGT}{{\it J. Graph Theory}, }
\newcommand{\ComHung}{{\it Combinatorica}, }
\newcommand{\DM}{{\it Discrete Math.}, }
\newcommand{\ARS}{{\it Ars Combin.}, }
\newcommand{\SIAMDM}{{\it SIAM J. Discrete Math.}, }
\newcommand{\SIAMADM}{{\it SIAM J. Algebraic Discrete Methods}, }
\newcommand{\SIAMC}{{\it SIAM J. Comput.}, }
\newcommand{\ConAMS}{{\it Contemp. Math. AMS}, }
\newcommand{\TransAMS}{{\it Trans. Amer. Math. Soc.}, }
\newcommand{\AnDM}{{\it Ann. Discrete Math.}, }
\newcommand{\NBS}{{\it J. Res. Nat. Bur. Standards} {\rm B}, }
\newcommand{\ConNum}{{\it Congr. Numer.}, }
\newcommand{\CJM}{{\it Canad. J. Math.}, }
\newcommand{\JLMS}{{\it J. London Math. Soc.}, }
\newcommand{\PLMS}{{\it Proc. London Math. Soc.}, }
\newcommand{\PAMS}{{\it Proc. Amer. Math. Soc.}, }
\newcommand{\JCMCC}{{\it J. Combin. Math. Combin. Comput.}, }
\newcommand{\GC}{{\it Graphs Combin.}, }

\title{ The Number of Subtrees of Trees with Given Degree Sequence \thanks{ This work is supported by the National
Natural Science Foundation of China (No:10971137), the National
Basic Research Program (973) of China (No.2006CB805900),  and a
grant of Science and Technology Commission of Shanghai Municipality
(STCSM, No: 09XD1402500) .\newline \indent
 $^{\dagger}$Corresponding author: Xiao-Dong
Zhang (Email: xiaodong@sjtu.edu.cn)}}
\author{Xiu-Mei Zhang$^{1,2}$, Xiao-Dong Zhang$^{1,\dagger}$,
Daniel Gray$^{3}$ and Hua  Wang$^{4}$\\
{\small $^{1}$Department of Mathematics,
 Shanghai Jiao Tong University} \\
{\small  800 Dongchuan road, Shanghai, 200240, P. R. China}\\
{\small $^{2}$Department of Mathematics,Shanghai sandau University}\\
{\small  2727 jinhai road, Shanghai, 201209, P. R. China}\\
{\small $^{3}$Department of Mathematics, University of Florida}\\
{\small Gainesville, FL 32611, United States}\\
{\small $^{4}$Department of Mathematical Sciences, Georgia Southern
University} \\
{\small Stateboro, GA 30460,  United States}
 }
\date{}
\maketitle
 \begin{abstract}
  This paper investigates some properties of
  the number of subtrees of a tree with given degree sequence. These
  results are used to characterize trees
  with the given degree sequence that have the largest number of
  subtrees, which generalizes the recent results of Kirk and Wang.
  These trees coincide with those which were proven by Wang and
  independently Zhang et al. to minimize the Wiener index. We also provide a partial ordering of the extremal trees with different degree sequences, some extremal results follow as corrollaries.
 \end{abstract}

{{\bf Key words:}  Tree; subtree;  degree sequence; majorization;
}\\

{{\bf AMS Classifications:} 05C05, 05C30} \vskip 0.5cm

\section{Introduction}
All graphs in this paper will be finite, simple and undirected. A
$tree$ $T=(V,E)$ is a connected, acyclic graph where $V(T)$ and $E(T)$ denote the
vertex set and edge set respectively. We refer to vertices of degree 1 of $T$ as
$leaves$. The unique path connecting two vertices $u,v$ in $T$ will
be denoted by $P_{T}(u,v)$.  The  number of edges on $P(u,v)$ is
called distance $dist_T(u,v)$, or for short $dist(u,v)$  between
them. We call a tree $(T,r)$ rooted at the vertex $r$ (or just by
$T$ if it is clear what the root is) by specifying a vertex $r\in
V(T)$. The $height$ of a vertex $v$ of a rooted tree $T$ with root
$r$ is $h_{T}(v)=dist_{T}(r,v)$. For any two different vertices
$u,v$ in a rooted tree $(T,r)$, we say that $v$ is a $successor$ of
$u$ and $u$ is an $ancestor$ of $v$ if $P_{T}(r,u)\subset
P_{T}(r,v)$. Furthermore, if $u$ and $v$ are adjacent to each other
and $dist_{T}(r,u)=dist_{T}(r,v)-1$, we say that $u$ is the $parent$
of $v$ and $v$ is a $child$ of $u$. Two vertices $u,v$ are siblings
of each other if they share the same parent. A subtree of a tree
will often be described by its vertex set.

The number of subtrees of a tree has received much attention. It is
well known that the path $P_n$ and the star $K_{1,n-1}$ have the
most and least subtrees among all trees of order $n$, respectively.
The binary trees that maximize or minimize the number of subtrees
are characterized in \cite{Szelely2005, Szelely2007}.

Formulas are given to calculate the number of subtrees of these
extremal binary trees. These formulas use a new representation of
integers as a sum of powers of 2. Number theorists have already
started investigating this new binary representation
\cite{Heuberger2007}. Also, the sequence of the number of subtrees
of these extremal binary trees (with $2l$ leaves, $l = 1, 2,
\cdots$) appears to be new \cite{Sequence}. Later, a linear-time
algorithm to count the subtrees of a tree is provided in
\cite{yan2006}.

In a related paper \cite{Szekely2005}, the number of leaf-containing
subtrees are studied for binary trees. The results turn out to be
useful in bounding the number of acceptable residue configurations.
See \cite{knudsen2003} for details.

An interesting fact is that among binary trees of the same size, the extremal one
that minimizes the number of subtrees is exactly the one that maximizes some chemical
indices such as the well known Wiener index, and vice versa.
In \cite{kirk2008}, subtrees of trees with given order and maximum vertex
degree are studied. The extremal trees coincide with
the ones for the Wiener index as well. Such correlations between different
topological indices of trees are studied in
\cite{wagner2007}.

Recently, in \cite{X.D.Zhang2008} and \cite{Hua2007} respectively, extremal
trees are characterized regarding the Wiener index with a given degree sequence.
Then it is natural to consider the following
question.

\begin{problem}
Given the degree sequence and the number of vertices of a tree, find the upper
bound for the number of subtrees, and characterize all
extremal trees that attain this bound.
\end{problem}

It will not be a surprise to see that such extremal trees coincide with the ones that attain the minimum Wiener index.
Along this line, we also provide an ordering of the degree sequences according to the largest number of subtrees.
With our main results, Theorems~\ref{theorem2-1} and \ref{theorem2-2}, one can  deduce
extremal graphs with the largest  number of subtrees  in some classes
of graphs. This generalizes the results of \cite{Szelely2005},
\cite{kirk2008}, etc.

The rest of this paper is
organized as follows: In Section 2, some notations and the main
theorems are stated. In Section 3, we present some observations regarding the structure of
 the extremal trees.
In Section 4, we present the proofs of the main theorems. In Section 5, we show,
as corollaries, characterizations of the extremal trees in different categories of
 trees including previously known results.

\section{Preliminaries}

For a nonincreasing sequence of positive integers $\pi=(d_0, \cdots,
d_{n-1})$ with $n\ge 3$, let ${\mathcal{T}}_{\pi}$ denote the set of
all trees with $\pi$ as its degree sequence. We can construct a
special tree $T_{\pi}^* \in \mathcal{T}_{\pi}$ by using
breadth-first search method as follows. Firstly, label the vertex
with the largest degree $d_0$ as $v_{01}$ (the root). Secondly,
label the neighbors of $v_0$ as $v_{11},v_{12},\dots,v_{1d_0}$ from
left to right and let $d(v_{1i})=d_i$ for $i=1, \cdots, d_0$. Then
repeat the second step for all newly labeled vertices until all
degrees are assigned. For example, if $\pi=(4, 4,3,
 3,3,3,3,2,1,1,1, 1, 1, 1,  1, 1, 1,1,1)$, $T^*_{\pi}$  is shown in Fig. 1.
  There is a vertex $v_{01}$ (the root) in layer 0 with the largest degree 4; its four
  neighbors are labeled as $v_{11}, v_{12}, v_{13}, v_{14}$ in layer 1, with
  degrees 4, 3, 3, 3 from left to right; nine
 vertices $v_{21},v_{22}, \cdots, v_{29}$ in layer 2; five
 vertices $v_{31}, v_{32}, v_{33}, v_{34}, v_{35}$ in layer 3. The number
 of vertices in each layer $i$, denoted by $s_i$ can be easily calculated as
 $s_0=1,$
 $ s_1=d_0=4,$ $s_2=d_1+d_2+d_3+d_4-s_1=4+3+3+3-4=9, $ and
 $s_3=d_5+\cdots+d_{13}-s_2=5$.

 \setlength{\unitlength}{0.1in}
\begin{picture}(60,30)
\put(28,27){\circle{0.5}}
 \put(28.7,27){$v_{01}$}
 \put(27.8,26.8){\line(-3,-1){20}}
\put(27.8,26.8){\line(-1,-1){6.6}}
 \put(28.2,26.8){\line(1,-1){6.6}}
\put(28.2,26.8){\line(3,-1){20}}

\put(7.6,20){\circle{0.5}} \put(20.9,20){\circle{0.5}}
\put(35,20){\circle{0.5}} \put(48.4,20){\circle{0.5}}

\put(9,19.5){$v_{11}$}\put(22,19.5){$v_{12}$}\put(36,19.5){$v_{13}$}\put(49.4,19.5){$v_{14}$}

\put(7.4,19.8){\line(-1,-1){5.1}} \put(7.6,19.78){\line(0,-1){5}}
 \put(7.8,19.8){\line(1,-1){5.1}}
\put(2.2,14.5){\circle{0.5}} \put(7.6,14.5){\circle{0.5}}
\put(13,14.5){\circle{0.5}}

\put(20.7,19.8){\line(-1,-1){5.1}} \put(21.1,19.8){\line(1,-1){5.1}}
\put(15.4,14.5){\circle{0.5}} \put(26.4,14.5){\circle{0.5}}

\put(3.2,13.3){$v_{21}$} \put(8.5,13.3){$v_{22}$}
\put(11,13.3){$v_{23}$} \put(15.25,13.3){$v_{24}$}
\put(26.5,13.3){$v_{25}$} \put(29.5,13.3){$v_{26}$}
\put(40,13.3){$v_{27}$} \put(42.5,13.3){$v_{28}$}
\put(53,13.34){$v_{29}$}

 \put(34.8,19.8){\line(-1,-1){5.1}}
\put(35.2,19.8){\line(1,-1){5.1}} \put(29.5,14.5){\circle{0.5}}
\put(40.4,14.5){\circle{0.5}}

\put(48.2,19.8){\line(-1,-1){5.1}} \put(48.6,19.8){\line(1,-1){5.1}}
 \put(43,14.5){\circle{0.5}}
\put(53.8,14.5){\circle{0.5}}

\put(2,14.3){\line(-1,-2){2}}
 \put(2.4,14.3){\line(1,-2){2}}
 \put(-0.15,10){\circle{0.5}}
\put(4.55,10){\circle{0.5}}

\put(7.6,14.3){\line(-1,-2){2}}
 \put(5.6,10){\circle{0.5}}
 \put(-0.4,8.7){$v_{31}$} \put(3.1,8.7){$v_{32}$} \put(6, 8.7){$v_{33}$}
\put(9.5,8.7){$v_{34}$} \put(13.6,8.7){$v_{35}$}

\put(9.5,10){\circle{0.5}} \put(7.6,14.3){\line(1,-2){2}}
\put(13,10){\circle{0.5}} \put(13,14.25){\line(0,-1){3.8}}

 \put(25,5){\bf Figure 1}
          \end{picture}

To explain the structure and properties of
$T_{\pi}^*$, we need the following notation from \cite{zhang2008}.
\begin{definition}(\cite{zhang2008})\label{def2-1}
Let $T=(V,E)$ be a tree with root $v_0$. A well-ordering $\prec$ of
the vertices is called a BFS-ordering  if $\prec$ satisfies the
following properties.

(1) If $u, v\in V$, and $u\prec v$, then $h(u)\le h(v)$ and $d(u)\ge
d(v)$;

(2) If there are two edges $ uu_1\in E(T)$ and
 $vv_1\in E(T)$ such that $u\prec v$, $h(u)=h(u_1)-1$ and $ h(v)=h(v_1)-1$,
 then $u_1\prec v_1$.

 We call trees that have a BFS-ordering of its vertices a
 BFS-tree.
\end{definition}
It is easy to see that  $T_{\pi}^*$ has a  BFS-ordering and
any two BFS-trees with degree sequence $\pi$
are isomorphic (for example, see \cite{zhang2008}). And the BFS-trees are extremal with respect to the Laplacian spectral radius.

Let $\pi=(d_0,\cdots,
d_{n-1})$ and
 $\pi^{\prime}=(d_0^{\prime}, \cdots, d_{n-1}^{\prime})$ be two
 nonincreasing sequences. If
 $\sum_{i=0}^{k}d_i\le\sum_{i=0}^kd_i^{\prime}$ for $k=0, \cdots, n-2$ and
 $\sum_{i=0}^{n-1}d_i=\sum_{i=0}^{n-1}d_i^{\prime}$, then the
 sequence $\pi^{\prime}$ is said to {\it major} the sequence $\pi$ and denoted by
 $\pi\triangleleft \pi^{\prime}$. It is known that the following
 holds (for example, see \cite{wei1982} or \cite{zhang2008}).
  \begin{proposition}(Wei \cite{wei1982})
 \label{prop}
 Let $\pi=(d_0, \cdots d_{n-1})$ and $\pi^{\prime}=(d_0^{\prime}, \cdots,
 d_{n-1}^{\prime})$ be two  nonincreasing graphic  degree sequences. If
 $\pi\triangleleft \pi^{\prime},$ then there exists a series of
 graphic degree sequences  $\pi_1, \cdots, \pi_k$ such that
 $\pi \triangleleft \pi_1\triangleleft \cdots \triangleleft
\pi_k\triangleleft \pi^{\prime}$, where $\pi_i$ and $\pi_{i+1}$
differ at exactly two entries, say $d_j$ ($d'_j$) and $d_k$ ($d'_k$)
of $\pi_i$ ($\pi_{i+1}$), with $d'_j = d_j +1$, $d'_k = d_k -1$ and
$j<k$.
  \end{proposition}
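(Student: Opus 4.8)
The plan is to argue by induction on the nonnegative integer
$$\Phi(\pi,\pi')=\sum_{m=0}^{n-2}\ \sum_{i=0}^{m}\bigl(d_i'-d_i\bigr),$$
which is well defined and nonnegative because $\pi\triangleleft\pi'$, and which vanishes precisely when $\pi=\pi'$ (the two sequences have equal total sum, so once all these partial sums agree, all entries agree). It therefore suffices to establish a single elementary step: if $\pi\triangleleft\pi'$ and $\pi\ne\pi'$, then there is a degree sequence $\widehat\pi=(\widehat d_0,\dots,\widehat d_{n-1})$ obtained from $\pi'$ by decreasing one entry by $1$ and increasing a later entry by $1$, such that $\widehat\pi\triangleleft\pi'$, either $\widehat\pi=\pi$ or $\pi\triangleleft\widehat\pi$, and $\Phi(\pi,\widehat\pi)<\Phi(\pi,\pi')$. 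Iterating this step from $\pi'$ produces a strictly $\Phi$-decreasing, hence finite, descending chain $\pi'\triangleright\pi^{(1)}\triangleright\cdots\triangleright\pi^{(k)}=\pi$; reading it backwards gives $\pi\triangleleft\pi^{(k-1)}\triangleleft\cdots\triangleleft\pi^{(1)}\triangleleft\pi'$, in which each consecutive pair differs by a $+1$ at a smaller index and a $-1$ at a larger index, which is exactly the asserted conclusion.

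To build $\widehat\pi$ the indices must be chosen with care. Let $k$ be the smallest index with $d_k>d_k'$; this exists, because at the first index $t$ where $\pi$ and $\pi'$ differ the majorization inequality forces $d_t<d_t'$, whence equality of total sums forces $d_i>d_i'$ for some $i>t$. Let $j$ be the largest index below $k$ with $d_j<d_j'$; this exists, because otherwise $d_i\ge d_i'$ for all $i<k$, and combined with $d_i\le d_i'$ for $i<k$ (minimality of $k$) this gives $d_i=d_i'$ for all $i<k$ and hence $\sum_{i\le k}d_i>\sum_{i\le k}d_i'$, contradicting $\pi\triangleleft\pi'$. By maximality of $j$ and minimality of $k$ we get $d_i=d_i'$ for every $i$ with $j<i<k$. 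Now let $\widehat\pi$ agree with $\pi'$ in all positions except that $\widehat d_j=d_j'-1$ and $\widehat d_k=d_k'+1$.

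Three short checks complete the step. First, $\widehat\pi$ is again a nonincreasing sequence of positive integers: positivity of the lowered entry follows from $\widehat d_j=d_j'-1\ge d_j\ge 1$, and monotonicity near positions $j$ and $k$ follows from $d_j'-1\ge d_j\ge d_{j+1}$ and $d_{k-1}\ge d_k\ge d_k'+1$ together with $d_i=d_i'$ for $j<i<k$, the only borderline instance $k=j+1$ being handled by $d_j'\ge d_j+1\ge d_k+1\ge d_k'+2$. Since the total sum is preserved, $\widehat\pi$ is again a tree degree sequence; more generally, any sequence majorized by a graphic sequence is graphic, so the ``graphic'' hypothesis is inherited in all cases. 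Second, $\widehat\pi\triangleleft\pi'$ and $\pi\triangleleft\widehat\pi$ (or $\pi=\widehat\pi$): the partial sums of $\widehat\pi$ equal those of $\pi'$ for $m<j$ and for $m\ge k$ and are smaller by exactly $1$ for $j\le m<k$, giving the first relation; and for $j\le m<k$ one has $\sum_{i\le m}(d_i'-d_i)=\sum_{i\le j}(d_i'-d_i)\ge d_j'-d_j\ge 1$, so $\sum_{i\le m}d_i\le\sum_{i\le m}\widehat d_i$, while for the other values of $m$ this same inequality is just $\pi\triangleleft\pi'$, so it holds for all $m$ (with equality at $m=n-1$). Third, $\Phi(\pi,\widehat\pi)=\Phi(\pi,\pi')-(k-j)\le\Phi(\pi,\pi')-1$.

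The one genuinely delicate point — the step I expect to take the most care — is the first check, preservation of monotonicity: a unit transfer between, say, the first index where $d_i<d_i'$ and the first later index where $d_i>d_i'$ can fail to produce a nonincreasing sequence, and it is exactly the extremal choices ``$k$ smallest'' and ``$j$ largest below $k$'', which force $\pi$ and $\pi'$ to coincide strictly between $j$ and $k$, that make the relevant inequalities go through. Everything else is bookkeeping with partial sums.
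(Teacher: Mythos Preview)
The paper does not give its own proof of this proposition; it is quoted from Wei~\cite{wei1982} (see also~\cite{zhang2008}) as a known result and then used as a black box in the proof of Theorem~\ref{theorem2-2}. So there is no ``paper's proof'' to compare against here.

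That said, your argument is correct and is essentially the standard one. The crucial choice is the one you flag yourself: taking $k$ to be the \emph{smallest} index with $d_k>d_k'$ and $j$ the \emph{largest} index below $k$ with $d_j<d_j'$ forces $d_i=d_i'$ for all $j<i<k$, and it is precisely this that makes the unit transfer $d_j'\mapsto d_j'-1$, $d_k'\mapsto d_k'+1$ preserve monotonicity (your inequalities $d_j'-1\ge d_j\ge d_{j+1}=d_{j+1}'$ and $d_{k-1}'=d_{k-1}\ge d_k\ge d_k'+1$, together with the borderline computation $d_j'\ge d_k'+2$ when $k=j+1$, are exactly right). The verifications that $\pi\triangleleft\widehat\pi\triangleleft\pi'$ and that $\Phi$ drops by $k-j\ge 1$ are straightforward partial-sum bookkeeping, as you say.

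One small remark on the ``graphic'' clause. Your appeal to the general fact that any nonnegative integer sequence majorized by a graphic sequence is itself graphic is legitimate (it is a classical result), but note that it can also be checked directly here: since $d_j'\ge d_k'+2$, in any realization $G$ of $\pi'$ the vertex of degree $d_j'$ has a neighbour that is neither the vertex of degree $d_k'$ nor one of its neighbours, and rerouting that edge realizes $\widehat\pi$. For the purposes of the present paper only tree degree sequences matter, and there, as you observe, the claim is immediate from $\sum_i\widehat d_i=\sum_i d_i'=2(n-1)$ together with the positivity you verified.
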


  The main results of this paper can be stated as follows.
  \begin{theorem}\label{theorem2-1}
   With a given degree sequence $\pi$, $T_{\pi}^*$ is the unique
   tree with the largest number of subtrees in ${\mathcal{T}}_{\pi}$.
  \end{theorem}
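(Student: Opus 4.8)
The plan is to prove the theorem in two stages: first show that any tree maximizing the number of subtrees in $\mathcal{T}_\pi$ must be a BFS-tree (hence isomorphic to $T_\pi^*$), and second argue uniqueness follows from the already-cited fact that any two BFS-trees with degree sequence $\pi$ are isomorphic. The engine throughout will be a local-switching argument: I will take a purported extremal tree $T$, assume it fails one of the two defining properties of a BFS-ordering from Definition~\ref{def2-1}, and produce a modification $T'\in\mathcal{T}_\pi$ with strictly more subtrees, a contradiction.

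For the switching mechanics I would introduce, for a vertex $v$ in a rooted tree $T$, the quantity $f_T(v)$ counting subtrees containing $v$ that lie entirely in the branch at $v$ (the component of $T-\{\text{parent of }v\}$ containing $v$), and more generally $f_{T,X}(v)$ counting subtrees of a subtree $X$ containing $v$. The standard multiplicative recursion $f_T(v) = \prod_{u\text{ a child of }v}\bigl(1+f_T(u)\bigr)$ lets one track how the global subtree count changes under rearranging branches. The two basic moves are: (i) \emph{swapping two branches} hanging at vertices $u\prec v$ at the same or different heights so as to put the ``heavier'' branch where property (1) or (2) demands; (ii) \emph{shifting a pendant branch} from a deeper vertex up to a shallower vertex of larger degree. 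In each case the change in the total number of subtrees factors as a product over the unaffected part of the tree times a difference of the form $\bigl(1+f(a)\bigr)\bigl(1+f(b)\bigr) - (\text{rearranged version})$, and one shows this difference has the right sign because the tree structure forces a monotonicity among the $f$-values. This is exactly the style of argument used by Kirk--Wang \cite{kirk2008}, adapted here to respect the fixed degree sequence rather than a fixed maximum degree; the structural observations promised for Section~3 are presumably precisely these $f$-value comparisons.

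Concretely, the steps are: (a) establish the recursion for $f_T$ and a ``greedy'' lemma stating that among rooted trees on a fixed multiset of branch sizes the subtree count is maximized by attaching the largest branches at the vertex of largest available degree; (b) using (a), show that if $T$ is extremal then along any root-to-leaf chain the branch-weights are nonincreasing and, comparing siblings, the $f$-values are ordered consistently with degrees — this forces property (1); (c) show that if two edges $uu_1$ and $vv_1$ violate property (2) (i.e. $u\prec v$ but $u_1\succ v_1$), then swapping the branches rooted at $u_1$ and $v_1$ strictly increases the count unless the branches are isomorphic, and in the isomorphic case one may relabel to satisfy (2) without loss; (d) conclude that an extremal $T$ admits a BFS-ordering, so $T\cong T_\pi^*$; combined with the cited uniqueness of BFS-trees with a given degree sequence, $T_\pi^*$ is the unique maximizer. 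Finally, when Proposition~\ref{prop} is invoked later (for the ordering of degree sequences in Theorem~\ref{theorem2-2}) the same switching lemmas will be reused, so it is worth stating lemma (a) in enough generality to cover a unit shift $d'_j = d_j+1$, $d'_k = d_k-1$.

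The main obstacle I anticipate is step (c)/(b): controlling the sign of the subtree-count change under a branch swap when the two branches are \emph{not} themselves already in BFS-form. A naive swap of two branches can decrease the count if the receiving vertices have other, larger branches. The fix is to do the switching in the right order — process the tree top-down and, at each level, first make each individual branch extremal (by induction on height) and only then permute the branches among the level's vertices — so that when one compares two branches one may assume both are already BFS-trees and invoke the greedy lemma cleanly. Making this induction on height interact correctly with the global degree-sequence constraint (a swap must not change which vertex has which degree in an illegal way) is the delicate bookkeeping, but it is routine once the order of operations is fixed.
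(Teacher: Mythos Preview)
Your high-level strategy --- show an optimal tree admits a BFS-ordering, then invoke uniqueness of BFS-trees --- matches the paper, but the implementation you sketch is genuinely different from what the paper does, and it has a real gap. The paper works with the \emph{global} quantity $f_T(v)$, the number of subtrees of all of $T$ containing $v$, not the branch-local quantity satisfying the product recursion you write down. Its engine is not a top-down level-by-level branch swap; instead it analyses arbitrary leaf-to-leaf \emph{paths} in an optimal $T$ (no root yet) and proves, via the Kirk--Wang switching lemmas, that the path-components $X_i,Y_i$ and then the global values $f_T(x_i),f_T(y_i)$ interlace monotonically along every such path (Lemmas~\ref{lemma3-6}, \ref{lemma3-9}) and that this forces a matching degree interlacing (Lemma~\ref{lemma3-7}). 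Only after these path results does the paper identify the root as the vertex maximizing $f_T$ and show it has maximum degree (Lemma~\ref{lemma3-10}); the BFS-ordering is then simply the ordering by decreasing $f_T$, and Corollary~\ref{corollary3-12} checks Definition~\ref{def2-1}.

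The gap in your plan is precisely the root. Your steps (b)--(d) presuppose a rooted tree and an induction on height, but you never say which vertex to root at, and with the wrong root the level-by-level branch swaps need not improve $T$ at all. Establishing that an optimal $T$ has a vertex of maximal degree that also maximizes $f_T$ is not free --- it is one of the harder points (the paper's Lemma~\ref{lemma3-10}) and it relies on the path-based lemmas, not on anything your greedy branch lemma (a) would give. Relatedly, your claim in (c) that swapping the branches at $u_1$ and $v_1$ ``strictly increases the count unless the branches are isomorphic'' is not correct as stated: by Lemma~\ref{lemma3-2} the sign of the change is governed by the comparison $f_W(u)$ versus $f_W(v)$ in the \emph{rest} of the tree, and strict increase requires both a branch inequality and a rest-of-tree inequality. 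You flag this in your ``main obstacle'' paragraph, but the fix you propose (process top-down, assume subbranches already BFS) still does not produce the needed inequality $f_W(u)>f_W(v)$ without something like the paper's path analysis. In short, your outline would need to import the path lemmas anyway; once you have them, the rooted top-down induction becomes unnecessary.
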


  \begin{theorem}\label{theorem2-2}
  Given two different degree sequences $\pi$ and $\pi_1$. If  $\pi \triangleleft
  \pi_1$, then the number of subtrees of $T_{\pi}^*$ is less than
  the number of subtrees of $T_{\pi_1}$.
  \end{theorem}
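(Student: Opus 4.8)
The plan is to reduce the relation $\pi\triangleleft\pi_1$ to elementary one‑step moves via Proposition~\ref{prop}, and then, for a single move, to exhibit one tree in $\mathcal{T}_{\pi_1}$ with strictly more subtrees than $T_\pi^*$; since $T_{\pi_1}^*$ is the unique maximiser over $\mathcal{T}_{\pi_1}$ by Theorem~\ref{theorem2-1}, this suffices. By Proposition~\ref{prop}, write $\pi=\sigma_0\triangleleft\sigma_1\triangleleft\cdots\triangleleft\sigma_m=\pi_1$ with consecutive terms nonincreasing graphic sequences of length $n$ and entry sum $2(n-1)$ (one checks the intermediate ones keep all entries $\ge1$, so they are degree sequences of $n$‑vertex trees) that differ in exactly two coordinates; as $\pi\ne\pi_1$ there is at least one step, so by transitivity it is enough to prove a strict inequality $F(T_{\sigma_i}^*)<F(T_{\sigma_{i+1}}^*)$ for one step. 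Thus assume $\pi_1$ arises from $\pi=(d_0,\dots,d_{n-1})$ by replacing $d_j$ with $d_j+1$ and $d_k$ with $d_k-1$, with $j<k$; then $d_k\ge2$ and $k\ge1$.

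Root $T_\pi^*$ at $v_{01}$, fix a BFS‑ordering as in Definition~\ref{def2-1}, and let $u,w$ be the vertices in positions $j,k$. Then $u\prec w$, so $h(u)\le h(w)$ and $d(u)=d_j\ge d_k=d(w)\ge2$; since every descendant of $w$ has height $>h(w)\ge h(u)$, the vertex $u$ is not a descendant of $w$. Pick a child $c$ of $w$, let $T_c$ be the branch of $T_\pi^*$ hanging at $c$ (the component of $c$ when the edge $wc$ is removed) and $R=T_\pi^*-V(T_c)$ the complementary subtree, which contains both $u$ and $w$. Let $T$ be obtained from $T_\pi^*$ by deleting $wc$ and adding $uc$; then $T$ is a tree on $n$ vertices with $d_T(u)=d_j+1$, $d_T(w)=d_k-1$ and all other degrees unchanged, so $T\in\mathcal{T}_{\pi_1}$.

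Write $F(S)$ for the number of subtrees of a tree $S$, and $f(S,x)$ (resp.\ $f(S,\{x,y\})$) for the number of subtrees of $S$ containing $x$ (resp.\ containing both $x$ and $y$). Splitting the subtrees of $T_\pi^*$ and of $T$ according to whether they use the transplanted edge gives
\[
F(T_\pi^*)=F(R)+F(T_c)+f(R,w)\,f(T_c,c),\qquad F(T)=F(R)+F(T_c)+f(R,u)\,f(T_c,c),
\]
so $F(T)-F(T_\pi^*)=\bigl(f(R,u)-f(R,w)\bigr)f(T_c,c)$, with $f(T_c,c)\ge1$. Splitting instead the subtrees of $T_\pi^*$ through $u$ (resp.\ through $w$) by whether they reach into $T_c$ gives $f(T_\pi^*,w)=f(R,w)\bigl(1+f(T_c,c)\bigr)$ and $f(T_\pi^*,u)=f(R,u)+f(R,\{u,w\})\,f(T_c,c)$. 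Since $\{w\}$ is a subtree of $R$ through $w$ but not $u$, we have $f(R,\{u,w\})<f(R,w)$; hence the single inequality $f(T_\pi^*,u)\ge f(T_\pi^*,w)$ already forces $f(R,u)>f(R,w)$, and therefore $F(T_{\pi_1}^*)\ge F(T)>F(T_\pi^*)$, which is what we want.

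So everything reduces to one monotonicity statement for the extremal tree itself: if $x\prec y$ in the BFS‑ordering of $T_\pi^*$, then $f(T_\pi^*,x)\ge f(T_\pi^*,y)$ — informally, in the greedy tree a BFS‑earlier vertex (more central, of larger degree) lies in at least as many subtrees. I expect this, not the bookkeeping above, to be the genuine obstacle; it is exactly the sort of fact the structural analysis of Section~3 is meant to supply, and it is close in spirit to what drives the proof of Theorem~\ref{theorem2-1}. A natural route is the product formula $f(T_\pi^*,v)=\prod_{z\sim v}\bigl(1+f(B_{v,z},z)\bigr)$, where $B_{v,z}$ is the branch at the neighbour $z$ pointing away from $v$: for BFS‑consecutive $x\prec y$ one compares the factors of $f(T_\pi^*,x)$ and $f(T_\pi^*,y)$ using $h(x)\le h(y)$, $d(x)\ge d(y)$ and the fact (from Section~3) that in $T_\pi^*$ a branch attached at an earlier vertex contributes at least as many subtrees through its root as the corresponding branch attached later, and then chains to the general case. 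Granting this monotonicity, the reduction above yields Theorem~\ref{theorem2-2}.
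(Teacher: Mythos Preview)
Your proposal is correct and follows essentially the same route as the paper. The paper also reduces via Proposition~\ref{prop} to a single elementary step, moves one child branch from the later vertex $v_j$ to the earlier vertex $v_i$, and deduces the strict inequality $f_W(v_i)>f_W(v_j)$ from $f_{T_\pi^*}(v_i)\ge f_{T_\pi^*}(v_j)$ together with $f_W(v_i,v_j)<f_W(v_j)$, then finishes with Lemma~\ref{lemma3-2}; the monotonicity $f_{T_\pi^*}(v_i)\ge f_{T_\pi^*}(v_j)$ along the BFS order is exactly what the proof of Theorem~\ref{theorem2-1} establishes (inequality~(\ref{allordering})), so your flagged ``genuine obstacle'' is already supplied there rather than needing a separate argument.
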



\section{Some Observations}

In order to prove Theorems~\ref{theorem2-1} and \ref{theorem2-2}, we
need to introduce some more terminologies. For a vertex $v$ of a
rooted tree $(T,r)$, let $T(v)$, the subtree induced by $v$, denote
the subtree of $T$ (rooted at $v$) that is induced by $v$ and all
its successors. For a tree $T$ and vertices
$v_{1}$,$v_{2}$,\dots,$v_{m-1}$,$v_{m}$ of $T$, let
$f_{T}(v_{1}$,$v_{2}$,\dots,$v_{m-1}$,$v_{m}$) denote the number of
subtrees of T that contain the vertices
$v_{1}$,$v_{2}$,$\dots$,$v_{m-1}$,$v_{m}$. In particular, $f_{T}(v)$
denotes the number of subtrees of $T$ that contain $v$. Let
$\varphi(T)$ denote the number of non-empty subtrees of $T$.

Let $W$ be a tree and $x, y$ be two vertices of $W$. The path
$P_{W}(x, y)$ from $x$ to $y$ can be denoted by $x_{m}x_{m-1}\dots
x_{2}x_{1}y_{1}y_{2}\dots y_{m-1}y_m$ for odd $dist(x,
y)$ or $ x_{m}x_{m-1}\dots x_{2}x_{1}zy_{1}y_{2}\dots y_{m-1}y_{m}$
for even $dist(x,y)$, where $x_m\equiv x, y_m\equiv y$. Let
$G_1$ be the graph resulted from $W$ by deleting all edges in
$P_W(x,y)$. The connected components (in $G_1$) containing
$x_{i}$, $y_{i}$ and $z$
 are denoted by $X_{i}$, $Y_i$ and $Z$, respectively, for $i=1, 2,\dots, m$.
 We also let  $X_{\geq k}$ be the connected component of $W$ containing $x_k$
 after deleting the edge
  $x_{k-1}x_k$  and  $Y_{\geq k}$ be the connected component
  of $W$ containing $y_k$ after deleting the edge $y_{k-1}y_k$, for $k=1, \cdots, m$.
   Figure~\ref{fig-path} shows such a labelling according to a path of odd length
   (without $z$).

\begin{figure}[ht]
\begin{center}
\begin{tikzpicture}

\draw(-1,0)--(0.5,0);
 \draw(1.5,0)--(5.5,0); \draw(6.5,0)--(8,0);

\draw [fill=blue!0](0,0)--(-0.25,0.5)--(0.25,0.5)--(0,0);
\draw[fill=blue!0](2,0)--(1.75,0.5)--(2.25,0.5)--(2,0);
\draw[fill=blue!0](3,0)--(2.75,0.5)--(3.25,0.5)--(3,0);
\draw[fill=blue!0](4,0)--(3.75,0.5)--(4.25,0.5)--(4,0);
\draw[fill=blue!0](5,0)--(4.75,0.5)--(5.25,0.5)--(5,0);
\draw[fill=blue!0](7,0)--(6.75,0.5)--(7.25,0.5)--(7,0);
\draw[fill=blue!0](-1,0)--(-3,1)--(-3,-1)--(-1,0);
 \draw[fill=blue!0] (8,0)--(10,1)--(10,-1)--(8,0);

\node at(0,0.75)[]{$X_{k-1}$};
\node at(1,0)[]{$\ldots$};
 \node at(2,0.75)[]{$X_2$};
  \node at(3,0.75) [] {$X_1$}; \node at (4,0.75)
[] {$Y_1$}; \node at (5,0.75) [] {$Y_2$}; \node at (6,0) []
{$\ldots$}; \node at (7,0.75) [] {$Y_{k-1}$}; \node at (-2,0) []
{$X_{\geq k}$}; \node at (9, 0) [] {$Y_{\geq k}$};

\node at (-1,0)[draw, circle, inner sep=0.5mm, label=below:$x_k$,
fill=green!0] {};
 \node at (0,0)[draw, circle, inner sep=0.5mm,
label=below:$x_{k-1}$, fill=green!0] {};
 \node at (2,0)[draw,
circle, inner sep=0.5mm, label=below:$x_2$, fill=green!0]{};
\node
at (3,0) [draw, circle, inner sep=0.5mm, label=below:$x_1$,
fill=green!0] {}; \node at (4,0) [draw, circle, inner sep=0.5mm,
label=below:$y_1$, fill=green!0] {}; \node at (5,0) [draw, circle,
inner sep=0.5mm, label=below:$y_2$, fill=green!0] {}; \node at (7,0)
[draw, circle, inner sep=0.5mm, label=below:$y_{k-1}$, fill=green!0]
{}; \node at (8,0) [draw, circle, inner sep=0.5mm,
label=below:$y_k$, fill=green!0] {};

\end{tikzpicture}
\end{center}
\addtocounter{figure}{1}
 \caption{Labelling of a path and the
components} \label{fig-path}
\end{figure}
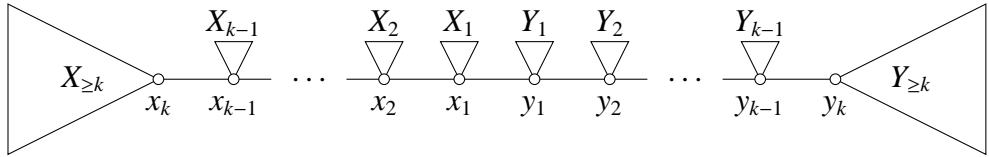

We need the next two lemmas from \cite{kirk2008} to proceed.

\begin{lemma}(\cite{kirk2008})\label{lemma3-1}
Let $W$ be a tree with a path $P_W(x_m,y_m)=x_{m}x_{m-1}\dots
x_{2}x_{1}(z)y_{1}y_{2}\dots y_{m-1}y_m$  from $x_m$ to $y_m$. If
  $f_{X_{i}} (x_{i})\geq f_{Y_{i}} (y_{i})$ for $i =1, 2,\dots,m$,
then $f_{W}(x_m)\geq f_{W} (y_m)$. Furthermore, if this inequality holds, then$f_{W}(x_m)= f_{W}
(y_m)$ if and only if  $f_{X_{i}} (x_{i})=f_{Y_{i}} (y_{i})$ for $i
=1, 2,\dots,m$.
\end{lemma}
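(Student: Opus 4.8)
The plan is to reduce the statement to an explicit enumeration formula for $f_W(x_m)$ and $f_W(y_m)$ and then compare the two expressions term by term. Put $a_i=f_{X_i}(x_i)$ and $b_i=f_{Y_i}(y_i)$ for $i=1,\dots,m$; these are positive integers, and the hypothesis is exactly $a_i\ge b_i\ge 1$. I would classify the subtrees $S$ of $W$ with $x_m\in S$ according to $S\cap P_W(x_m,y_m)$. Since the intersection of two subtrees of a tree is again a subtree, this intersection is a subpath of $P_W(x_m,y_m)$ containing $x_m$, so it is either $\{x_m,\dots,x_k\}$ for some $2\le k\le m$, or $\{x_m,\dots,x_1,y_1,\dots,y_j\}$ for some $0\le j\le m$ (when $P_W(x_m,y_m)$ has even length the vertex $z$ must also be adjoined before any $y_i$). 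A subtree whose path-intersection is exactly $\{x_k,\dots,x_m\}$ is obtained by choosing, for each $i=k,\dots,m$, a subtree of the branch $X_i$ containing $x_i$ and gluing these along the path edges, so there are $\prod_{i=k}^{m}a_i$ of them; likewise one with path-intersection $\{x_m,\dots,x_1,y_1,\dots,y_j\}$ contributes $\prod_{i=1}^{m}a_i\prod_{i=1}^{j}b_i$. For $P_W(x_m,y_m)$ of odd length (the even case is identical up to an extra factor $c=f_Z(z)\ge1$ from the branch at $z$, which affects none of the sign arguments below), summing over all cases gives
\[
f_W(x_m)=\sum_{k=2}^{m}\prod_{i=k}^{m}a_i+\Big(\prod_{i=1}^{m}a_i\Big)\Big(1+\sum_{j=1}^{m}\prod_{i=1}^{j}b_i\Big),
\]
and, interchanging the roles of the $x_i$ and $y_i$,
\[
f_W(y_m)=\sum_{k=2}^{m}\prod_{i=k}^{m}b_i+\Big(\prod_{i=1}^{m}b_i\Big)\Big(1+\sum_{j=1}^{m}\prod_{i=1}^{j}a_i\Big).
\]

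The second step is to subtract these identities. Collecting the mixed terms by the index $j$ and factoring out $\prod_{i=1}^{j}a_ib_i$ yields
\[
f_W(x_m)-f_W(y_m)=\sum_{k=2}^{m}\Big(\prod_{i=k}^{m}a_i-\prod_{i=k}^{m}b_i\Big)+\Big(\prod_{i=1}^{m}a_i-\prod_{i=1}^{m}b_i\Big)+\sum_{j=1}^{m}\Big(\prod_{i=1}^{j}a_ib_i\Big)\Big(\prod_{i=j+1}^{m}a_i-\prod_{i=j+1}^{m}b_i\Big).
\]
Since $a_i\ge b_i\ge1$, every difference of suffix products on the right-hand side is $\ge0$ and every prefactor $\prod_{i=1}^{j}a_ib_i$ is $\ge1$, so each summand is nonnegative; hence $f_W(x_m)\ge f_W(y_m)$. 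For the equality statement: if $f_W(x_m)=f_W(y_m)$ then every summand vanishes, and in particular $\prod_{i=1}^{m}a_i=\prod_{i=1}^{m}b_i$; as $1\le b_i\le a_i$, a single strict inequality $a_j>b_j$ would make this product strictly larger, so $a_i=b_i$, i.e. $f_{X_i}(x_i)=f_{Y_i}(y_i)$, for all $i=1,\dots,m$. Conversely, if $a_i=b_i$ for every $i$ the two displayed formulas coincide, so $f_W(x_m)=f_W(y_m)$.

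The only point that needs care is the bookkeeping in the first step: one must check that the classification of $S\cap P_W(x_m,y_m)$ is exhaustive and that the cases are pairwise disjoint (this is why the first sum starts at $k=2$), and that the boundary situations — $k=m$, $j=0$, $j=m$, and the extra branch at $z$ when the path has even length — are each accounted for exactly once. Once the two closed formulas are in place, the inequality and its equality case drop out immediately from the sign of each term; small cases such as $W=P_{2m}$ with all branches trivial are convenient checks that the formula has been written correctly.
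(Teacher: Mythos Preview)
Your proof is correct. The paper does not give its own argument for this lemma; it is quoted from \cite{kirk2008} and used as a black box, so there is no paper proof to compare against directly. Your approach --- classifying subtrees of $W$ containing $x_m$ (resp.\ $y_m$) by their intersection with the path, writing $f_W(x_m)$ and $f_W(y_m)$ as explicit polynomials in the branch counts $a_i=f_{X_i}(x_i)$ and $b_i=f_{Y_i}(y_i)$, and subtracting --- is the natural direct argument and goes through cleanly. The key algebraic step, rewriting the cross terms as $\prod_{i\le j}a_ib_i\bigl(\prod_{i>j}a_i-\prod_{i>j}b_i\bigr)$, is exactly what makes each summand visibly nonnegative, and your equality analysis via $\prod a_i=\prod b_i$ with $a_i\ge b_i\ge1$ is the shortest way to force $a_i=b_i$ for all $i$. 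The bookkeeping caveats you flag (the $k=1$ term reappearing as the ``$1$'' inside the bracket, the vanishing $j=m$ term in the difference, and the harmless extra factor $c=f_Z(z)$ in the even-length case) are precisely the points where a careless write-up would go wrong, so it is good that you call them out explicitly.
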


Now let $X$ and $Y$
be two rooted trees  with roots $x'$ and $y'$. Let $T$ be a
 tree containing vertices $x$ and $y$. Then we can build
$T^{\prime}$ by identifying the root $x'$ of $X$ with $x$ of $T$ and the root
$y'$ of $Y$ with $y$ of $T$, and $T^{\prime\prime}$ by
identifying the root $x'$ of $X$ with $y$ of
$T$ and the root $y'$ of $Y$ with $x$ of $T$.

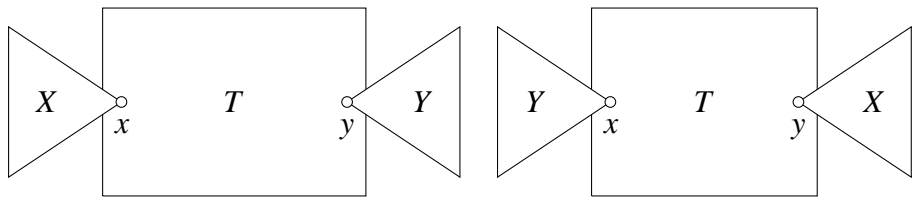
\begin{figure}[ht]
\begin{center}
\begin{tikzpicture}

\draw[fill=blue!0](-1.25,-1.25)--(-1.25,1.25)--(2.25,1.25)--(2.25,-1.25)--(-1.25,-1.25);
\draw[fill=blue!0](-1,0)--(-2.5,1)--(-2.5,-1)--(-1,0);
 \draw[fill=blue!0] (2,0)--(3.5,1)--(3.5,-1)--(2,0);

\node at(0.5,0)[]{$T$};
 \node at(-2,0)[]{$X$};
 \node at(3, 0)[]{$Y$};

\node at(-1,0)[draw, circle, inner sep=0.5mm, label=below:$x$,
fill=green!0] {}; \node at(2,0) [draw, circle, inner sep=0.5mm,
label=below:$y$, fill=green!0]{};

\draw[fill=blue!0](5.25,-1.25)--(5.25,1.25)--(8.25,1.25)--(8.25,-1.25)--(5.25,-1.25);
\draw[fill=blue!0](5.5,0)--(4,1)--(4,-1)--(5.5,0);
\draw[fill=blue!0](8,0)--(9.5,1)--(9.5,-1)--(8,0);

\node at (6.75,0)[]{$T$};
 \node at (4.5,0)[] {$Y$}; \node at(9,0) []
{$X$};

\node at (5.5,0) [draw, circle, inner sep=0.5mm, label=below:$x$, fill=green!0] {};
\node at (8,0) [draw, circle, inner sep=0.5mm, label=below:$y$, fill=green!0] {};

\end{tikzpicture}
\end{center}
\caption{Constructing $T'$ (left) and $T''$ (right)}
\label{fig-2T}
\end{figure}

\begin{lemma}(\cite{kirk2008})\label{lemma3-2}
Let $T$ ,$T^{\prime}$,$T^{\prime\prime}$ be as in Figure 2.
If $f_{}(x)\ge f_{W}
(y)$ and $f_{X}(x)\le f_{Y} (y)$, then $\varphi(T^{\prime\prime})\ge
\varphi(T^{\prime})$ with equality if and only if $f_{T}(x)=f_{T}
(y)$ or
 $f_{X}(x')= f_{Y} (y')$.
\end{lemma}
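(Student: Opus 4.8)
The plan is to obtain exact formulas for $\varphi(T')$ and $\varphi(T'')$ by sorting the subtrees of each graph according to how they meet the ``core'' tree $T$, and then to compare the two formulas term by term; the difference will factor as a product of two quantities that the hypotheses force to have the same sign.

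First I would record the structural fact that makes the counting possible: since $X$ and $Y$ are attached to $T$ along the single vertices $x=x'$ and $y=y'$, every subtree $S$ of $T'$ is uniquely described by the triple $\bigl(S\cap V(T),\, S\cap V(X),\, S\cap V(Y)\bigr)$, in which $S\cap V(T)$ is a (possibly empty) subtree of $T$, $S\cap V(X)$ is a (possibly empty) subtree of $X$, $S\cap V(Y)$ is a (possibly empty) subtree of $Y$, and the only compatibility constraint is that $S\cap V(X)\neq\emptyset$ forces $x'\in S$ (so $x\in S\cap V(T)$ whenever $S\cap V(T)\neq\emptyset$), and symmetrically for $Y$; conversely every compatible triple arises from a unique subtree. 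Splitting on whether $S\cap V(T)$ is empty: if it is, then $S$ is a nonempty subtree of $X$ avoiding $x'$ or a nonempty subtree of $Y$ avoiding $y'$, and the number of such subtrees is $\bigl(\varphi(X)-f_X(x')\bigr)+\bigl(\varphi(Y)-f_Y(y')\bigr)$, a quantity that is visibly symmetric in $X$ and $Y$ and hence contributes the same amount, call it $C$, to $\varphi(T')$ and to $\varphi(T'')$.

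Next I would handle the case $S\cap V(T)=S_0\neq\emptyset$. A given nonempty subtree $S_0$ of $T$ can be enlarged into $X$ in exactly $f_X(x')$ ways if $x\in S_0$ (the minimal choice $S\cap V(X)=\{x'\}$ being the ``no enlargement'' option) and in exactly one way otherwise, and independently into $Y$ in $f_Y(y')$ ways if $y\in S_0$ and one way otherwise. Partitioning the subtrees $S_0$ of $T$ into the four classes determined by whether each of $x,y$ lies in $S_0$ --- whose sizes are $f_T(x)-f_T(x,y)$, $f_T(y)-f_T(x,y)$, $f_T(x,y)$, and $\varphi(T)-f_T(x)-f_T(y)+f_T(x,y)$ by inclusion--exclusion --- gives
\begin{align*}
\varphi(T') &= C + \bigl(f_T(x)-f_T(x,y)\bigr)f_X(x') + \bigl(f_T(y)-f_T(x,y)\bigr)f_Y(y') \\
&\quad {}+ f_T(x,y)\,f_X(x')\,f_Y(y') + \bigl(\varphi(T)-f_T(x)-f_T(y)+f_T(x,y)\bigr),
\end{align*}
and $\varphi(T'')$ is given by the same expression with $f_X(x')$ and $f_Y(y')$ interchanged in the two middle summands (because in $T''$ the tree $X$ hangs from $y$ and $Y$ hangs from $x$, by the description in Figure~\ref{fig-2T}). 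Subtracting, all terms cancel except the two mixed ones, and a short rearrangement yields
\[
\varphi(T'')-\varphi(T') = \bigl(f_T(x)-f_T(y)\bigr)\bigl(f_Y(y')-f_X(x')\bigr).
\]
The hypotheses $f_T(x)\ge f_T(y)$ and $f_X(x')\le f_Y(y')$ make both factors nonnegative, so $\varphi(T'')\ge\varphi(T')$, with equality exactly when $f_T(x)=f_T(y)$ or $f_X(x')=f_Y(y')$ --- the asserted equality condition.

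The only delicate point, and more a matter of care than a genuine obstacle, is the bookkeeping in the decomposition: one must verify that the triple $\bigl(S\cap V(T),\, S\cap V(X),\, S\cap V(Y)\bigr)$ really determines $S$ and that every compatible triple is realised, that the ``empty enlargement'' $S\cap V(X)=\{x'\}$ is counted once (it is subsumed in $S_0$ precisely when $x\in S_0$) rather than separately, and that the four classes of $S_0$ are disjoint and exhaustive. Once the formula for $\varphi(T')$ is pinned down, the comparison and the equality analysis are immediate.
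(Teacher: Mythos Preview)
The paper does not supply its own proof of this lemma; it is quoted from \cite{kirk2008} without argument. Your proof is correct and is precisely the standard direct-counting argument one would expect: classify subtrees by their intersection with the core $T$, note that the symmetric terms (the $C$ term, the $f_T(x,y)$ term, and the ``neither $x$ nor $y$'' term) cancel in the difference, and factor what remains as $(f_T(x)-f_T(y))(f_Y(y')-f_X(x'))$, from which both the inequality and the equality characterisation are immediate.
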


From Lemmas \ref{lemma3-1} and \ref{lemma3-2},
we immediately achieve
the following observation. We leave the proof to the reader.

\begin{lemma}\label{lemma3.3}
Let $T$ be a  tree in $\mathcal{T}_{\pi}$ and
$P(x_m,y_m)=x_{m}x_{m-1}\dots x_{2}x_{1}(z)y_{1}y_{2}\dots
y_{m-1}y_m $ be a path of $T$. Let $T^{\prime}$ be the tree from $T$
by deleting the  two edges $x_{k}x_{k+1}$ and $y_{k}y_{k+1}$ and adding
two edges $x_{k+1}y_k$ and $y_{k+1}x_k$. If $f_{X_{i}}(x_i)\ge
f_{Y_{i}}(y_{i})$ for $i=1, \cdots, k$ and $1\le k\le m-1$, and
$f_{X_{\ge k+1}}(x_{k+1})\le f_{Y_{\ge k+1}}(y_{k+1}),$ then
$$\varphi(T)\le \varphi (T^{\prime})$$
with equality if and only if $f_{X_{\ge k+1}}(x_{k+1})= f_{Y_{\ge
k+1}}(y_{k+1})$ or  $f_{X_{i}}(x_i)= f_{Y_{i}}(y_{i})$ for $i=1,
\cdots, k$.
\end{lemma}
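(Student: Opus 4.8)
\textbf{Proof proposal for Lemma~\ref{lemma3.3}.}

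The plan is to realize the transformation $T \to T'$ as a single application of Lemma~\ref{lemma3-2}, after first checking that the path hypotheses let us apply Lemma~\ref{lemma3-1} to compare the relevant $f$-values. First I would set up the decomposition: view the path $P(x_m,y_m)$ and delete the two edges $x_k x_{k+1}$ and $y_k y_{k+1}$. This splits $T$ into three pieces — the component $X_{\ge k+1}$ containing $x_{k+1}$, the component $Y_{\ge k+1}$ containing $y_{k+1}$, and the remaining central piece, call it $T_0$, which is a tree containing both $x_k$ and $y_k$ and in fact contains the subpath $x_k x_{k-1}\dots x_1 (z) y_1 \dots y_k$. Now take $X := X_{\ge k+1}$ rooted at $x' := x_{k+1}$, take $Y := Y_{\ge k+1}$ rooted at $y' := y_{k+1}$, and take $T := T_0$ with the two marked vertices $x := x_k$ and $y := y_k$. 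Then $T'$ (identifying $X$ at $x_{k+1}$ with $y_k$, i.e. the root of $X$ glued to $y$, and $Y$ at $y_{k+1}$ with $x_k$) is exactly the $T''$ of Lemma~\ref{lemma3-2}, while the original $T$ is the $T'$ of that lemma. So the conclusion $\varphi(T)\le\varphi(T')$ will follow from Lemma~\ref{lemma3-2} once its two hypotheses are verified.

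The two hypotheses of Lemma~\ref{lemma3-2} are $f_{T_0}(x_k)\ge f_{T_0}(y_k)$ and $f_{X_{\ge k+1}}(x_{k+1})\le f_{Y_{\ge k+1}}(y_{k+1})$. The second of these is given directly in the statement. For the first, I would apply Lemma~\ref{lemma3-1} to the tree $T_0$ with the path $P_{T_0}(x_k,y_k)=x_k x_{k-1}\dots x_1 (z) y_1 \dots y_k$: the components hanging off this path at $x_i$ and $y_i$ inside $T_0$ are precisely the same $X_i$ and $Y_i$ (for $i=1,\dots,k$) that arise in the decomposition of the full tree $T$, since deleting $x_k x_{k+1}$ and $y_k y_{k+1}$ does not touch any of those branches. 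The hypothesis $f_{X_i}(x_i)\ge f_{Y_i}(y_i)$ for $i=1,\dots,k$ is exactly what Lemma~\ref{lemma3-1} needs, so it gives $f_{T_0}(x_k)\ge f_{T_0}(y_k)$, and moreover equality here holds iff $f_{X_i}(x_i)=f_{Y_i}(y_i)$ for all $i$. Feeding both hypotheses into Lemma~\ref{lemma3-2} yields $\varphi(T)\le\varphi(T')$.

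For the equality characterization, Lemma~\ref{lemma3-2} says equality holds iff $f_{T_0}(x_k)=f_{T_0}(y_k)$ or $f_X(x')=f_Y(y')$, i.e. $f_{X_{\ge k+1}}(x_{k+1})=f_{Y_{\ge k+1}}(y_{k+1})$. By the previous paragraph the first disjunct is equivalent to $f_{X_i}(x_i)=f_{Y_i}(y_i)$ for $i=1,\dots,k$, which gives exactly the stated condition. I expect the main (really the only) subtlety to be the bookkeeping in the first paragraph: namely verifying carefully that after the two edge deletions the central component $T_0$ really is a single tree containing the whole subpath $x_k\dots(z)\dots y_k$ with the branches $X_i, Y_i$ intact, so that the roles in Lemma~\ref{lemma3-2} are assigned consistently. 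This is routine once the picture of Figure~\ref{fig-path} is kept in mind, and it is the reason the authors remark that the proof can be left to the reader.
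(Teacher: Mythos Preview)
Your proposal is correct and follows exactly the approach the paper intends: the authors state that Lemma~\ref{lemma3.3} follows immediately from Lemmas~\ref{lemma3-1} and~\ref{lemma3-2} and leave the details to the reader, and your decomposition into the central piece $T_0$ (to which Lemma~\ref{lemma3-1} is applied) together with the two outer pieces $X_{\ge k+1}$, $Y_{\ge k+1}$ (playing the roles of $X$, $Y$ in Lemma~\ref{lemma3-2}) is precisely what is needed. The only cosmetic point is that Lemma~\ref{lemma3-2} is phrased in terms of \emph{identifying} the root of $X$ with a vertex of $T$ rather than attaching it by an edge, so to match literally you should take $X$ to be $X_{\ge k+1}$ together with an extra pendant edge rooted at the new leaf (and similarly for $Y$); since $f_X(x')=1+f_{X_{\ge k+1}}(x_{k+1})$ this changes nothing in either the inequality or the equality analysis.
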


For convenience, we refer to trees that maximize the number of subtrees as {\it optimal}.
In terms of the structure of the optimal tree, we have the following version of Lemma~\ref{lemma3.3}.

\begin{corollary}\label{corollary3.4}
Let $T$ be an optimal  tree in $\mathcal{T}_{\pi}$ and
$P(x_m,y_m)=x_{m}x_{m-1}\dots x_{2}x_{1}(z)y_{1}y_{2}\dots
y_{m-1}y_m $ be a path of $T$.  If $f_{X_{i}}(x_i)\ge
f_{Y_{i}}(y_{i})$ for $i=1, \cdots, k$  with at least one strict
inequality and $1\le k\le m-1$,   then $f_{X_{\ge k+1}}(x_{k+1})\ge
f_{Y_{\ge k+1}}(y_{k+1})$.
\end{corollary}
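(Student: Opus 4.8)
The plan is to argue by contradiction, using Lemma~\ref{lemma3.3} together with the definition of an optimal tree. Suppose the conclusion fails, that is, $f_{X_{\ge k+1}}(x_{k+1}) < f_{Y_{\ge k+1}}(y_{k+1})$. In particular $f_{X_{\ge k+1}}(x_{k+1}) \le f_{Y_{\ge k+1}}(y_{k+1})$, so together with the given inequalities $f_{X_i}(x_i) \ge f_{Y_i}(y_i)$ for $i=1,\dots,k$ (and $1\le k\le m-1$, so the edges $x_k x_{k+1}$ and $y_k y_{k+1}$ genuinely lie on the path), all hypotheses of Lemma~\ref{lemma3.3} are satisfied for this value of $k$. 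Let $T^{\prime}$ be the tree obtained from $T$ by deleting $x_k x_{k+1}$ and $y_k y_{k+1}$ and adding $x_{k+1}y_k$ and $y_{k+1}x_k$, exactly as in Lemma~\ref{lemma3.3}. This switching operation leaves the degree of every vertex unchanged, and removing the two path-edges splits $T$ into the three pieces $X_{\ge k+1}$, $Y_{\ge k+1}$, and the middle component containing $x_k,\dots,x_1,(z),y_1,\dots,y_k$, which the two new edges reassemble into a single tree; hence $T^{\prime}\in\mathcal{T}_{\pi}$.

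Next I would invoke Lemma~\ref{lemma3.3} to get $\varphi(T)\le\varphi(T^{\prime})$, and then rule out equality. The lemma states that equality holds only if $f_{X_{\ge k+1}}(x_{k+1}) = f_{Y_{\ge k+1}}(y_{k+1})$ or $f_{X_i}(x_i) = f_{Y_i}(y_i)$ for every $i=1,\dots,k$. The contradiction hypothesis $f_{X_{\ge k+1}}(x_{k+1}) < f_{Y_{\ge k+1}}(y_{k+1})$ excludes the first alternative, and the assumption that at least one of the inequalities $f_{X_i}(x_i)\ge f_{Y_i}(y_i)$ is strict excludes the second. Therefore $\varphi(T)<\varphi(T^{\prime})$ with $T^{\prime}\in\mathcal{T}_{\pi}$, contradicting the optimality of $T$. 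Hence $f_{X_{\ge k+1}}(x_{k+1}) \ge f_{Y_{\ge k+1}}(y_{k+1})$, as claimed.

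I do not anticipate a serious obstacle: the corollary is essentially the contrapositive repackaging of Lemma~\ref{lemma3.3} specialized to optimal trees. The only points requiring a little care are verifying that the switching operation keeps us inside $\mathcal{T}_{\pi}$ (degrees are obviously preserved, but one should note explicitly that the two new edges reconnect the three components into a tree), and checking that the two strictness assumptions — one of which is the negated conclusion, the other of which is given — really do eliminate \emph{both} clauses of the equality case in Lemma~\ref{lemma3.3}, so that the inequality $\varphi(T)<\varphi(T^{\prime})$ is strict and the contradiction with optimality is genuine.
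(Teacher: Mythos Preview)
Your proposal is correct and is exactly the argument the paper has in mind: the corollary is stated immediately after Lemma~\ref{lemma3.3} as its ``optimal tree'' version with no separate proof given, and your contradiction argument (apply Lemma~\ref{lemma3.3}, use the two strictness assumptions to exclude both equality clauses, obtain $\varphi(T)<\varphi(T')$ with $T'\in\mathcal{T}_{\pi}$) is precisely how one unpacks that. The extra care you take in noting that $T'\in\mathcal{T}_{\pi}$ is appropriate and not made explicit in the paper.
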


\begin{lemma}\label{lemma3.4}
Let $T$ be an optimal tree in $\mathcal{T}_{\pi}$ and
$P(x_m,y_m)=x_{m}x_{m-1}\dots x_{2}x_{1}(z)y_{1}y_{2}\dots
y_{m-1}y_m $ be a path of $T$. If $f_{X_{i}}(x_i)\ge
f_{Y_{i}}(y_{i})$ for $i=1, \cdots, k$ with at least one strict
inequality and $1\le k\le m-1$,
 then $f_{X_{k+1}}(x_{k+1})\ge
f_{Y_{k+1}}(y_{k+1})$ .
\end{lemma}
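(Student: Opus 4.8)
The plan is to compare the two quantities $f_{X_{\ge k+1}}(x_{k+1})$ and $f_{X_{k+1}}(x_{k+1})$ by peeling off the branch $X_{\ge k+2}$ hanging at $x_{k+1}$ along the path. Recall that $X_{\ge k+1}$ is obtained from $X_{k+1}$ by attaching, at the vertex $x_{k+1}$, the entire component $X_{\ge k+2}$ through the edge $x_{k+1}x_{k+2}$ (and similarly on the $Y$-side). The elementary counting identity for subtrees containing a fixed vertex after such an attachment is
$$
f_{X_{\ge k+1}}(x_{k+1}) \;=\; f_{X_{k+1}}(x_{k+1})\cdot\bigl(1+f_{X_{\ge k+2}}(x_{k+2})\bigr),
$$
and the analogous identity holds for $Y$. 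So the inequality $f_{X_{k+1}}(x_{k+1})\ge f_{Y_{k+1}}(y_{k+1})$ that we want is equivalent to
$$
\frac{f_{X_{\ge k+1}}(x_{k+1})}{1+f_{X_{\ge k+2}}(x_{k+2})}\;\ge\;\frac{f_{Y_{\ge k+1}}(y_{k+1})}{1+f_{Y_{\ge k+2}}(y_{k+2})}.
$$
By Corollary~\ref{corollary3.4} applied with the same $k$, the hypothesis (at least one strict inequality among $i=1,\dots,k$) already gives $f_{X_{\ge k+1}}(x_{k+1})\ge f_{Y_{\ge k+1}}(y_{k+1})$, so the numerator comparison goes the right way; the issue is the denominators, i.e.\ we would be done immediately if we knew $f_{X_{\ge k+2}}(x_{k+2})\le f_{Y_{\ge k+2}}(y_{k+2})$.

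First I would dispose of the boundary case $k=m-1$: then $X_{\ge k+1}=X_m=X_{k+1}$ and $Y_{\ge k+1}=Y_m=Y_{k+1}$, so there is nothing hanging beyond, the denominators are both $1$, and the conclusion is just Corollary~\ref{corollary3.4}. For $k\le m-2$, the approach is a contradiction argument using optimality of $T$. Suppose $f_{X_{k+1}}(x_{k+1})< f_{Y_{k+1}}(y_{k+1})$. I would then consider swapping the branches $X_{\ge k+1}$ and $Y_{\ge k+1}$ — that is, apply the surgery of Lemma~\ref{lemma3.3} at index $k$: delete $x_kx_{k+1}$ and $y_ky_{k+1}$, add $x_ky_{k+1}$ and $y_kx_{k+1}$. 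For this to produce $\varphi(T')\ge\varphi(T)$ we need $f_{X_{\ge k+1}}(x_{k+1})\le f_{Y_{\ge k+1}}(y_{k+1})$; combined with $f_{X_{\ge k+1}}(x_{k+1})\ge f_{Y_{\ge k+1}}(y_{k+1})$ from Corollary~\ref{corollary3.4}, this forces equality $f_{X_{\ge k+1}}(x_{k+1})= f_{Y_{\ge k+1}}(y_{k+1})$. Plugging the contradiction hypothesis $f_{X_{k+1}}(x_{k+1})< f_{Y_{k+1}}(y_{k+1})$ into the product identity above then forces $1+f_{X_{\ge k+2}}(x_{k+2}) > 1+f_{Y_{\ge k+2}}(y_{k+2})$, i.e.\ $f_{X_{\ge k+2}}(x_{k+2}) > f_{Y_{\ge k+2}}(y_{k+2})$.

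Now I would exploit this strict inequality together with the hypotheses $f_{X_i}(x_i)\ge f_{Y_i}(y_i)$ for $i=1,\dots,k$ (which, via the same product identities telescoping, give $f_{X_{\ge i}}(x_i)\ge f_{Y_{\ge i}}(y_i)$ for each such $i$ when combined with the branch data) to run a second surgery — the swap of Lemma~\ref{lemma3.3} at index $k+1$, exchanging $X_{\ge k+2}$ with $Y_{\ge k+2}$. Because $f_{X_i}(x_i)\ge f_{Y_i}(y_i)$ for $i=1,\dots,k$ and, from the equality case just derived, $f_{X_{k+1}}(x_{k+1})\le f_{Y_{k+1}}(y_{k+1})$ would need handling; the cleaner route is: the strict inequality $f_{X_{\ge k+2}}(x_{k+2}) > f_{Y_{\ge k+2}}(y_{k+2})$ says the $X$-side is "heavier" past $x_{k+1}$, and then swapping $X_{\ge k+2}\leftrightarrow Y_{\ge k+2}$ to the heavier side of the remaining path (using that on $x_1,\dots,x_k$ the $X$-side already dominates) strictly increases $\varphi$ by the equality clause of Lemma~\ref{lemma3.3}, contradicting optimality of $T$. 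The main obstacle I anticipate is bookkeeping: making sure that after the first (hypothetical) swap the relevant $f_{X_i}(x_i)\ge f_{Y_i}(y_i)$ comparisons for $i\le k$ are genuinely preserved and that the second swap's hypotheses (the direction of the inequality on the new "short" branches versus on $X_{\ge k+2},Y_{\ge k+2}$) line up so that the equality-case analysis yields a \emph{strict} gain; this is exactly where the "at least one strict inequality" assumption must be tracked carefully so that some inequality in the chain remains strict after the surgeries. \qed
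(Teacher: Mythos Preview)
Your boundary case $k=m-1$ is fine, and so is the deduction of $f_{X_{\ge k+2}}(x_{k+2})>f_{Y_{\ge k+2}}(y_{k+2})$ --- which, incidentally, follows directly from Corollary~\ref{corollary3.4} together with the assumption $f_{X_{k+1}}(x_{k+1})<f_{Y_{k+1}}(y_{k+1})$, without any swap: from $a(1+b)\ge c(1+d)$ with $0<a<c$ one gets $1+b\ge c(1+d)/a>1+d$. The gap is in your two surgeries. Your ``first swap'' at index $k$ does not yield the equality $f_{X_{\ge k+1}}(x_{k+1})=f_{Y_{\ge k+1}}(y_{k+1})$: Lemma~\ref{lemma3.3} needs the tail hypothesis $f_{X_{\ge k+1}}(x_{k+1})\le f_{Y_{\ge k+1}}(y_{k+1})$ as an \emph{input}, and Corollary~\ref{corollary3.4} hands you the reverse inequality, so the lemma simply does not apply; you cannot conclude anything from a swap whose premise is not satisfied. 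More seriously, your ``second surgery'' at index $k+1$ is blocked by the very contradiction hypothesis: Lemma~\ref{lemma3.3} at level $k+1$ requires $f_{X_{k+1}}(x_{k+1})\ge f_{Y_{k+1}}(y_{k+1})$, which you are assuming fails, and it requires the tail comparison in the direction opposite to the one you just derived. Relabelling $x\leftrightarrow y$ does not rescue this either, since some $f_{X_i}(x_i)>f_{Y_i}(y_i)$ for $i\le k$. This is not a bookkeeping issue; neither of your swaps has its hypotheses met.

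The paper's proof uses a different, single surgery that your setup does not reach: it swaps \emph{only} the local branches $X_{k+1}$ and $Y_{k+1}$, keeping $X_{\ge k+2}$ on the $x$-side and $Y_{\ge k+2}$ on the $y$-side. Concretely, delete the four edges $x_kx_{k+1}$, $x_{k+1}x_{k+2}$, $y_ky_{k+1}$, $y_{k+1}y_{k+2}$ and add $x_ky_{k+1}$, $y_{k+1}x_{k+2}$, $y_kx_{k+1}$, $x_{k+1}y_{k+2}$. This move is not an instance of Lemma~\ref{lemma3.3}, so the paper computes $\varphi(T')-\varphi(T)$ directly; the difference factors as $\bigl(f_{Y_{k+1}}(y_{k+1})-f_{X_{k+1}}(x_{k+1})\bigr)$ times an expression that is positive because $f_W(x_k)>f_W(y_k)$ (Lemma~\ref{lemma3-1}, using the strict inequality among $i\le k$) and $f_{X_{\ge k+2}}(x_{k+2})\ge f_{Y_{\ge k+2}}(y_{k+2})$ (derived exactly as above from Corollary~\ref{corollary3.4}). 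This gives $\varphi(T')>\varphi(T)$, contradicting optimality. You had identified the right auxiliary inequalities; what is missing is this four-edge swap exchanging just $X_{k+1}$ with $Y_{k+1}$, together with the explicit computation of the resulting change in $\varphi$.
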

\begin{proof} If $k=m-1$, then by Corollary~\ref{corollary3.4}, the
assertion holds since $f_{X\ge m}(x_m)=f_{X_m}(x_m)$ and $f_{Y\ge
m}(y_m)=f_{Y_m}(y_m)$. Hence we assume that $1\le k\le m-2$. Suppose
that $f_{X_{k+1}}(x_{k+1})< f_{Y_{k+1}}(y_{k+1})$. Denote by $M$ the
number of  subtrees of $T$ not containing vertices $x_k$ and $y_k$.
Let $W$ be the connected component of $T$ by deleting the two edges
$x_kx_{k+1}$ and $y_ky_{k+1}$ containing vertices $x_k$ and $y_k$.
Then
\begin{eqnarray*}
\varphi(T)&=& \left\{1+f_{X_{k+1}}(x_{k+1})[1+f_{X_{\ge k+2}}(x_{k+2})]\right\}
[f_W(x_k)-f_W(x_k, y_k)]+\\
    & &\left\{1+f_{Y_{k+1}}(y_{k+1})[1+f_{Y_{\ge k+2}}(y_{k+2})]\right\}[f_W(y_k)-f_W(x_k,
    y_k)]+\\
&&\left\{1+f_{X_{k+1}}(x_{k+1})[1+f_{X_{\ge
k+2}}(x_{k+2})]\right\}\left\{1+f_{Y_{k+1}}(y_{k+1})[1+f_{Y_{\ge
k+2}}(y_{k+2})]\right\}f_W(x_k, y_k)+M.
\end{eqnarray*}
Oh the other hand, let $T^{\prime}$ be  the tree from $T$ by
deleting four edges $x_kx_{k+1}$, $x_{k+1}x_{k+2}$, $ y_ky_{k+1}$
and $y_{k+1}y_{k+2}$ and adding  four edges $x_ky_{k+1}$,
$y_{k+1}x_{k+2}$, $y_kx_{k+1}$ and $x_{k+1}y_{k+2}$. Clearly,
$T^{\prime}\in \mathcal{T}_{\pi}$ and
\begin{eqnarray*}
\varphi(T^{\prime})&=&\left\{1+f_{Y_{k+1}}(y_{k+1})[1+f_{X_{\ge
k+2}}(x_{k+2})]\right\}
[f_W(x_k)-f_W(x_k, y_k)]+\\
    & &\left\{1+f_{X_{k+1}}(x_{k+1})[1+f_{Y_{\ge k+2}}(y_{k+2})]\right\}[f_W(y_k)-f_W(x_k,
    y_k)]+\\
&&\left\{(1+f_{Y_{k+1}}(y_{k+1})[1+f_{X_{\ge
k+2}}(x_{k+2})]\right\}\left\{1+f_{X_{k+1}}(x_{k+1})[1+f_{Y_{\ge
k+2}}(y_{k+2})]\right\}f_W(x_k, y_k)+M.
\end{eqnarray*}
Hence
\begin{eqnarray*}
\varphi(T^{\prime})-\varphi(T)&=&(f_{Y_{k+1}}(y_{k+1})-f_{X_{k+1}}(x_{k+1}))\{[1+f_{X_{\ge
k+2}}(x_{k+2})][f_W(x_k)-f_W(x_k, y_k)]- \\
 &&[1+f_{Y_{\ge k+2}}(y_{k+2})][f_W(y_k)-f_W(x_k, y_k)]+f_w(x_k,y_k)(f_{X_{\ge
k+2}}(x_{k+2})-f_{Y_{\ge k+2}}(y_{k+2}))\}.
 \end{eqnarray*}
 Obviously, we have $f_W(y_k)>f_W(x_k, y_k)$ and $f_W(x_k)>f_W(x_k, y_k)$.
   By Lemma~\ref{lemma3-1},
 we have $f_W(x_k)>f_W(y_k)$. Further by
Corollary~\ref{corollary3.4}, we have $f_{X_{\ge k+1}}(x_{k+1})\ge
f_{Y_{\ge k+1}}(y_{k+1}).$ Since $f_{X_{\ge
k+1}}(x_{k+1})=f_{X_{k+1}}(x_{k+1})(1+f_{X_{\ge k+2}}(x_{k+2}))$ and
$f_{Y_{\ge k+1}}(y_{k+1})=f_{Y_{k+1}}(y_{k+1})(1+f_{Y_{\ge
k+2}}(y_{k+2}))$, we have $f_{X_{\ge k+2}}(x_{k+2})\ge f_{Y_{\ge
k+2}}(y_{k+2})$ since we assumed $f_{X_{k+1}}(x_{k+1})< f_{Y_{k+1}}(y_{k+1})$. Therefore,
 $\varphi(T^{\prime})>\varphi(T)>0$, contradicting to
 the optimality of $T$. So the
assertion holds.
\end{proof}

\begin{lemma}\label{lemma3-6}
Let $P$ be a path of an optimal $T$ in $\mathcal{T}_{\pi}$ whose end
vertices are leaves.

\noindent {\bf (i)} If the length of $P$ is odd ($2m-1$), then the vertices of $P$
can be labeled as $x_mx_{m-1}\cdots x_1$ $y_1y_2\cdots y_m$ such
that
$$f_{X_1}(x_1)\ge f_{Y_1}(y_1)\ge f_{X_2}(x_2)\ge f_{Y_2}(y_2)\ge\cdots
\ge f_{X_m}(x_m)= f_{Y_m}(y_m)=1.$$

\noindent {\bf (ii)} If the length of $P$ is even ($2m$), then the vertices of $P$ can
be labeled as $x_{m+1}x_mx_{m-1}\cdots x_1$ $y_1y_2\cdots y_m$ such
that
$$f_{X_1}(x_1)\ge f_{Y_1}(y_1)\ge f_{X_2}(x_2)\ge f_{Y_2}(y_2)\ge
\cdots\ge  f_{X_m}(x_m)\ge f_{Y_m}(y_m)= f_{X_{m+1}}(x_{m+1})=1.$$
\end{lemma}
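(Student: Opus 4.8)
The plan is to prove Lemma~\ref{lemma3-6} by a ``straightening'' argument: starting from an arbitrary labeling of the path $P$ along one of the two possible orientations, repeatedly apply the swapping operation of Lemma~\ref{lemma3.3} (or rather its corollary for optimal trees) until the sequence of $f$-values becomes the required ``zig-zag'' nonincreasing sequence. Since $T$ is optimal, every admissible swap must fail to strictly increase $\varphi(T)$, which forces the relevant inequalities to go the ``right'' way. Concretely, I would first set up notation: for the path $P$ with leaf endpoints, relabel so that one endpoint is $x_m$ (or $x_{m+1}$) and the other is $y_m$, with the branch trees $X_i, Y_i$ hanging off the path vertices as in Figure~\ref{fig-path}. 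Because the endpoints are leaves, $X_m$ ($X_{m+1}$) and $Y_m$ consist of a single vertex, so $f_{X_m}(x_m)=f_{Y_m}(y_m)=1$ (resp. $f_{X_{m+1}}(x_{m+1})=1$); this pins down the ``small end'' of the claimed chain of inequalities.

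The core of the argument is the chain of inequalities $f_{X_1}(x_1)\ge f_{Y_1}(y_1)\ge f_{X_2}(x_2)\ge\cdots$. First, by choosing the orientation of $P$ appropriately, I may assume $f_{X_1}(x_1)\ge f_{Y_1}(y_1)$ (if not, swap the roles of $x$ and $y$). Now I proceed inductively. Suppose we have already established $f_{X_1}(x_1)\ge f_{Y_1}(y_1)\ge\cdots$ down through some level, with at least one strict inequality among $f_{X_i}(x_i)\ge f_{Y_i}(y_i)$ for $i\le k$ (the degenerate all-equal case can be handled separately, or absorbed since then a symmetric relabeling applies). Lemma~\ref{lemma3.4} then gives $f_{X_{k+1}}(x_{k+1})\ge f_{Y_{k+1}}(y_{k+1})$, which is one of the desired inequalities. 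For the interleaving inequality $f_{Y_k}(y_k)\ge f_{X_{k+1}}(x_{k+1})$ I need a separate comparison: here the point is that $X_{\ge k+1}$ (the whole tail of the tree beyond $x_k$) contains $X_{k+1}$ as the subtree rooted at $x_{k+1}$ hanging below $x_k$, and by Corollary~\ref{corollary3.4} applied at level $k$ we already know $f_{X_{\ge k+1}}(x_{k+1})\ge f_{Y_{\ge k+1}}(y_{k+1})$; combined with $f_{Y_k}(y_k)$ being ``large'' relative to what sits further along, optimality rules out the reverse strict inequality. If $f_{Y_k}(y_k)<f_{X_{k+1}}(x_{k+1})$, then one can reroute the path (detach $X_{k+1}$ from $x_k$ and the tail $Y_k$ from $y_{k-1}$, reattach so as to push the heavier branch closer to the center) to get a tree in $\mathcal{T}_\pi$ with strictly more subtrees, contradicting optimality — this is essentially another instance of the $T\mapsto T'$ exchange underlying Lemma~\ref{lemma3.3}, and I would verify the sign of $\varphi(T')-\varphi(T)$ exactly as in the proof of Lemma~\ref{lemma3.4}, using $f_W(x_k)>f_W(x_k,y_k)$ etc.

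For the parity cases, part (i) ends the chain at $f_{X_m}(x_m)=f_{Y_m}(y_m)=1$, which is automatic from the leaf hypothesis, so the induction simply runs from $k=1$ up to $k=m-1$. Part (ii) is the same induction but the path has one more vertex $x_{m+1}$ on the $x$-side; the chain ends $\cdots\ge f_{X_m}(x_m)\ge f_{Y_m}(y_m)=f_{X_{m+1}}(x_{m+1})=1$, where again $f_{X_{m+1}}(x_{m+1})=1$ because $x_{m+1}$ is a leaf, $f_{Y_m}(y_m)=1$ because $y_m$ is a leaf, and the inequality $f_{X_m}(x_m)\ge f_{Y_m}(y_m)=1$ is trivial (any branch tree has at least one subtree through its root). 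The only subtlety in the even case is making sure the ``extra'' vertex is placed on the correct side — it must be on the $x$-side, i.e., the side of the initially heavier branch; this follows because if it were on the $y$-side the orientation-choosing step at the start would have told us to swap.

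The main obstacle I anticipate is the interleaving inequalities $f_{Y_i}(y_i)\ge f_{X_{i+1}}(x_{i+1})$, as opposed to the ``same-level'' ones $f_{X_i}(x_i)\ge f_{Y_i}(y_i)$: the latter come directly from Lemma~\ref{lemma3.4}, but the former require comparing a branch on the $y$-side at depth $i$ with a branch on the $x$-side at depth $i+1$, and the clean tool (Lemma~\ref{lemma3.4}) does not immediately apply. Handling this will require either a careful bookkeeping argument relating $f_{X_{\ge i+1}}$ to $f_{X_{i+1}}$ and $f_{X_{\ge i+2}}$ (as in the identity $f_{X_{\ge k+1}}(x_{k+1})=f_{X_{k+1}}(x_{k+1})(1+f_{X_{\ge k+2}}(x_{k+2}))$ used in Lemma~\ref{lemma3.4}), or a direct exchange argument with an explicit $\varphi(T')-\varphi(T)$ computation; I would also need to be careful that the all-equalities degenerate case does not break the ``at least one strict inequality'' hypothesis needed to invoke Lemma~\ref{lemma3.4} and Corollary~\ref{corollary3.4}, treating it by observing that equality throughout makes the desired chain hold trivially after a consistent relabeling.
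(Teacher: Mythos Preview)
Your overall strategy—fix a labeling, then propagate inequalities along the path using Lemma~\ref{lemma3.4} and the optimality of $T$—is the right one, and you correctly identify the interleaving inequalities $f_{Y_k}(y_k)\ge f_{X_{k+1}}(x_{k+1})$ as the crux. However, this is precisely where your argument has a real gap, and the paper resolves it by a device you do not mention.

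The key difference is the \emph{initial placement of $x_1$}. You fix $x_1,y_1$ at the geometric midpoint of $P$ and then pick the orientation so that $f_{X_1}(x_1)\ge f_{Y_1}(y_1)$. That gives you enough to start Lemma~\ref{lemma3.4} for the ``same-level'' chain $f_{X_i}\ge f_{Y_i}$, but it gives you nothing for the interleaving chain: at the base step you would need $f_{Y_1}(y_1)\ge f_{X_2}(x_2)$, and you have no reason to believe this. Your fallback—an ad hoc exchange ``detach $X_{k+1}$ from $x_k$ and the tail $Y_k$ from $y_{k-1}$''—is mis-specified (neither $X_{k+1}$ nor $Y_k$ is attached where you say), and even with the right reattachment it is not clear you can force a strict gain without already knowing the tail comparisons you are trying to prove.

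The paper instead labels so that $f_{X_1}(x_1)$ is the \emph{maximum} of all $f_{X_i}(x_i),f_{Y_j}(y_j)$ along $P$, at some a priori unknown position on the path. After a short case analysis this yields a ``prefix'' pattern
\[
f_{X_1}(x_1)>f_{Y_1}(y_1),\quad f_{Y_1}=f_{X_2},\ \dots,\ f_{Y_{k-1}}=f_{X_k},\quad f_{Y_k}>f_{X_{k+1}},
\]
and from this, together with Corollary~\ref{corollary3.4} and the leaf conditions at the ends, one forces the maximum to sit at the midpoint (i.e.\ $r=s=m$). Once that is known, Lemma~\ref{lemma3.4} is applied \emph{twice}: once to the path $x_m\cdots y_m$ to get $f_{X_i}\ge f_{Y_i}$ for all $i$, and once to the \emph{shifted} path $x_m\cdots x_2x_1y_1\cdots y_{m-1}$ (with $x_1$ playing the role of the central vertex and $y_i\leftrightarrow x_{i+1}$ in the roles of the two sides) to get $f_{Y_i}\ge f_{X_{i+1}}$ for all $i$. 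The prefix pattern above supplies exactly the seed inequality with a strict sign needed to launch Lemma~\ref{lemma3.4} on the shifted path. This shifted-path trick is the missing idea in your outline; without it, the interleaving inequalities do not follow from the tools already in hand.
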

\begin{proof}
We provide the proof of part (i), part (ii) can be shown in a similar manner.

Obviously, the vertices of $P$ may be labeled as $x_rx_{r-1}\cdots
x_1y_1y_2\cdots y_s$ such that $f_{X_1}$ the maximum among  $f_{X_i}$ and $f_{X_j}$ for $i=1,2,\cdots,r$
and $j=1,2,\cdots,s$, where $r+s=2m$. Therefore, there is only one of the following three cases:


\noindent {\bf Case 1:} If the number of the maximum components is one, then there exists a $1\le k\le m$ such that
\begin{equation}\label{equ3-1}
f_{X_1}(x_1)>f_{Y_1}(y_1),\  \ \ f_{Y_1}(y_1)=
f_{X_{2}}(x_{2}),\cdots, f_{Y_{k-1}}(y_{k-1})= f_{X_{k}}(x_{k}),
f_{Y_k}(y_k)>f_{X_{k+1}}(x_{k+1})
\end{equation}
Next we will prove (1). It is divided into three subcases.

   \noindent {\bf Case 1.1:} If $f_{Y_1}(y_1)>f_{X_2}(x_2)$, then we have $k=1$ and (1)holds.

   \noindent {\bf Case 1.2:} If $f_{Y_1}(y_1)<f_{X_2}(x_2)$, then the vertices of $P$ may be relabeled such that$y_{i}$ is instead by $x_{i+1}$ for $i=1,\cdots,s$ and $X_{i}$ is instead by $y_{i-1}$ for $i=2,\cdots,r$. Hence it is  the same as the subcase 1.1.

   \noindent {\bf Case 1.3:} If $f_{Y_1}(y_1)=f_{X_2}(x_2)$.  Then we must have
                $f_{Y_2}(y_2)>f_{X_3}(x_3)$ or $f_{Y_2}(y_2)<f_{X_3}(x_3)$ or $f_{Y_2}(y_2)=f_{X_3}(x_3)$.

           \noindent {\bf Case 1.3.1:} If $f_{Y_2}(y_2)>f_{X_3}(x_3)$, then we have $k=2$ and (1)holds.

           \noindent {\bf Case 1.3.2:} If $f_{Y_2}(y_2)<f_{X_3}(x_3)$, then the vertices of $P$ may
                                be relabeled such that$y_{i}$ is instead by $x_{i+1}$ for $i=1,\cdots,s$
                                and $X_{i}$ is instead by $y_{i-1}$ for $i=2,\cdots,r$.
                                Hance, the case is the same as the subcase 1.3.1.

           \noindent {\bf Case 1.3.3:} If $f_{Y_2}(y_2)=f_{X_3}(x_3)$, we can continue to analyze like
                                $f_{Y_1}(y_1)=f_{X_2}(x_2)$. Then we have $k\ge 3$ and (1)holds.
Next we will prove that if (\ref{equ3-1}) holds, then we must have

$$r=s=m.$$
Otherwise, if $r<s$,  then by Lemma~\ref{lemma3.4}, $f_{X_i}(x_i)\ge
f_{Y_i}(y_i)$ for $i=1, \cdots, r$. Hence by
Corollary~\ref{corollary3.4}  we have $f_{X_{\ge r}}(x_r)\ge
f_{Y_{\ge r}}(y_r).$ On the other hand,  it is clear that $f_{X_{\ge r}}(x_r)=1$ and
$f_{Y_{\ge r}}(y_r)\ge 2$, contradiction.

If $r>s$, then $r\ge s+2$ since $r+s=2m$. Now we consider the path from
vertex $x_{s+1}$ to $y_s$. By Lemma~\ref{lemma3.4}, we have
$f_{Y_i}(y_i)\ge f_{X_{i+1}}(x_{i+1})$ for $i=1, \cdots, s$.
Further, by Corollary~\ref{corollary3.4}, we have $f_{Y_{\ge
s}}(y_s)\ge f_{X_{\ge s+1}}(x_{s+1}).$ Similarly, since $f_{Y_{\ge
s}}(y_s)=1$ and $f_{X_{\ge s+1}}(x_{s+1})\ge 2$,
contradiction. Therefore $r=s=m$.

Now by
Lemma~\ref{lemma3.4} applied to the path from $x_m$ to $y_m$, we have $f_{X_{
i}}(x_i)\ge f_{Y_{ i}}(y_i)$ for $i=1, \cdots, m$. On the other
hand, by Lemma~\ref{lemma3.4} applied to the path  from $y_{m-1}$ to $x_m$,
we have $f_{Y_{ i}}(y_i)\ge f_{X_{ i+1}}(x_{i+1})$ for $i=1,2,\dots
,m-1$. Hence the assertion holds.\\
\noindent {\bf Case 2:} If the number of the maximum components is $2k\ge 2$. Then the path $P$ can be labeled as $x_mx_{m-1}\cdots x_1y_1y_2\cdots y_m$ such that
\begin{equation}\label{equ3-2}
f_{X_1}(x_1)=f_{Y_1}(y_1)=\cdots  =f_{X_k}(x_k)= f_{Y_k}(y_k)>
f_{X_{k+1}}(x_{k+1})\ge f_{Y_{k+1}}(y_{k+1}),
\end{equation}
and the vertices $x_1,x_2,\cdots, x_k,y_1,y_2,\cdots, y_k$ are in the maximum components respectively.
That is to say all the maximum components are adjoining. Otherwise, there must be two pair vertices satisfying the first inequality in (1). Hence either of them, the vertices of $P$ may be labeled as
$x_{r_1}x_{r_1-1}\cdots x_1y_1y_2\cdots y_{s_1}$ or $x_{r_2}x_{r_1-1}\cdots x_1y_1y_2\cdots y_{s_2}$. By the case 1, we can have $r_1=r_2=s_1=s_2=m$. But it is impossible.Therefore, if there are more than one component with the most subtrees containing the vertex on the path $P$, then all of them must adjoin.

\noindent {\bf Case 3:} If the number of the maximum components is $2k+1>2$. Then the path $P$ can be labeled as $x_mx_{m-1}\cdots x_1y_1y_2\cdots y_m$ such that
\begin{equation}\label{equ3-3}
f_{X_1}(x_1)=f_{Y_1}(y_1)=\cdots=  f_{X_{k}}(x_{k})=
f_{Y_{k}}(y_{k})= f_{X_{k+1}}(x_{k+1})> f_{Y_{k+1}}(y_{k+1}),
\end{equation}
We omits the details.

Then Cases (\ref{equ3-2}) or (\ref{equ3-3}) can be handled in the same manner, we omit the details here.
\end{proof}

Following the conditions in Lemma~\ref{lemma3-6}, we have the following.

\begin{lemma}\label{lemma3-9}
\noindent {\bf (i)} If case (i) of Lemma~\ref{lemma3-6} holds, then

$$f_{T}(x_{1})\geq f_{T}(y_{1})> f_{T}(x_{2})\geq f_{T}(y_{2})>
\cdots > f_{T}(x_{m})\geq f_{T}(y_{m}).$$ Moreover, if $
f_T(x_k)=f_T(y_k)$ for some $1\le k\le m,$ then
 $f_T(x_i)=f_T(y_i)$ for $i=k, \cdots, m$.

\noindent {\bf (ii)} If case (ii) of Lemma~\ref{lemma3-6} holds, then
$$f_{T}(x_{1})> f_{T}(y_{1})\geq f_{T}(x_{2})> f_{T}(y_{2})\geq
\cdots \geq f_{T}(x_{m})>f_{T}(y_{m})\geq f_{T}(x_{m+1}).$$
Moreover, if $f_T(y_k)=f_T(x_{k+1})$ for some $1\le k\le m$, then
$f_T(y_i)=f_T(x_{i+1})$ for $i=k, \cdots m$.
\end{lemma}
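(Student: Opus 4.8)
The plan is to relate $f_T(x_i)$ and $f_T(y_i)$ to the quantities $f_{X_i}(x_i)$, $f_{Y_i}(y_i)$ already controlled by Lemma~\ref{lemma3-6}, using the structure of the path $P$. Recall the basic decomposition: for a vertex $x_i$ lying on the path $P(x_m,y_m)$, a subtree of $T$ containing $x_i$ is determined by its intersection with $X_i$ (a subtree of $X_i$ containing $x_i$, or the empty choice) together with how far it extends along the path in each direction and what it picks up in the hanging components $X_{i-1},\dots,X_1,Y_1,\dots$ attached along the way. Concretely, writing $A_i := f_{X_i}(x_i)$ and introducing the ``partial products'' that count subtrees growing from $x_i$ along the path toward $x_m$ and toward $y_m$, one gets a clean closed form
$$f_T(x_i) = A_i \cdot \Big(1 + L_i\Big)\Big(1 + R_i\Big),$$
where $L_i$ accumulates contributions from $X_{i+1},\dots,X_m$ (the ``outward'' side) and $R_i$ accumulates contributions from $X_{i-1},\dots,X_1$ and then the $Y$-side, each factor of the form $1 + A_j(1+\cdots)$. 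The analogous formula holds for $f_T(y_i)$ with the roles of the two sides swapped.

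First I would establish these recursions precisely (they are the same bookkeeping that appears in the proof of Lemma~\ref{lemma3.4}), so that $f_T(x_i)$ and $f_T(y_i)$ are explicit monotone functions of the sequence $A_1 \ge B_1 \ge A_2 \ge B_2 \ge \cdots$ guaranteed by Lemma~\ref{lemma3-6}(i) (here $B_i := f_{Y_i}(y_i)$). Next I would compare $f_T(x_i)$ with $f_T(y_i)$: moving from $y_i$ to $x_i$ along $P$ swaps which partial products lie on which side, but since the ordering $A_1 \ge B_1 \ge A_2 \ge \cdots$ means that at every position the $x$-side component dominates the corresponding $y$-side component, every factor entering $f_T(x_i)$ is at least the matching factor entering $f_T(y_i)$; hence $f_T(x_i) \ge f_T(y_i)$. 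For the strict inequality $f_T(y_i) > f_T(x_{i+1})$, observe that passing from $x_{i+1}$ to $y_i$ one gains the whole component $X_{i+1}$ (which contains at least the vertex $x_{i+1}$, so contributes a strictly positive extra term), so the count strictly increases; this is where one uses $1 \le k \le m-1$ implicitly and the fact that $y_i$ genuinely sees more of the tree than $x_{i+1}$. Alternatively, both comparisons can be obtained directly by applying Lemma~\ref{lemma3-1} to suitable sub-paths: for $f_T(x_i) \ge f_T(y_i)$ use the path from $x_i$ to $y_i$ and check the hypothesis $f_{X_j}(x_j) \ge f_{Y_j}(y_j)$ shifted appropriately, and for the strict version compare along the path from $y_i$ to $x_{i+1}$ noting a strict inequality at one coordinate.

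For the ``moreover'' clause, I would trace through the equality case of Lemma~\ref{lemma3-1}: if $f_T(x_k) = f_T(y_k)$, then in the comparison along $P(x_k,y_k)$ every coordinate inequality $f_{X_j}(x_j) \ge f_{Y_j}(y_j)$ must be an equality, which by the chain $A_1 \ge B_1 \ge A_2 \ge \cdots$ forces $A_i = B_i$ for all $i \ge$ some index, and feeding this back into the closed form gives $f_T(x_i) = f_T(y_i)$ for every $i \ge k$. Part (ii) is entirely parallel, with the even-length path and the extra terminal vertex $x_{m+1}$ handled by the same swap-and-compare argument; the shift in the inequality chain (now $A_1 > B_1 \ge A_2 > \cdots$) is exactly what turns the first inequality in each pair strict and keeps the second one weak. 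The main obstacle I anticipate is purely organizational: writing the partial-product recursions cleanly enough that the termwise domination is transparent, and correctly matching up which factor of $f_T(x_i)$ corresponds to which factor of $f_T(y_i)$ (resp.\ $f_T(x_{i+1})$) under the relabeling — the inequalities themselves are then immediate from Lemma~\ref{lemma3-6} and the monotonicity of these expressions, and the equality analysis is a direct appeal to the equality clause of Lemma~\ref{lemma3-1}.
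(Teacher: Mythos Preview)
Your overall framework — writing $f_T(x_k)$ and $f_T(y_k)$ as products of a local factor, an ``outward'' tail factor, and an ``inward'' factor, then comparing termwise — is exactly what the paper does. The paper sets $W_{k-1}$ to be the middle piece obtained by deleting both $x_{k-1}x_k$ and $y_{k-1}y_k$, expands $f_T(x_k)$ and $f_T(y_k)$, and after a cross-term cancellation obtains
\[
f_T(x_k)-f_T(y_k)=A_k\bigl(1+f_{X_{\ge k+1}}(x_{k+1})\bigr)\bigl(1+f_{W_{k-1}}(x_{k-1})\bigr)-B_k\bigl(1+f_{Y_{\ge k+1}}(y_{k+1})\bigr)\bigl(1+f_{W_{k-1}}(y_{k-1})\bigr).
\]
So be careful: in your raw factorization $A_i(1+L_i)(1+R_i)$ the ``inward'' pieces $R_i$ and $R_i'$ live in \emph{different} subtrees of $T$ and are not directly comparable; you need either this cancellation or your alternative route via Lemma~\ref{lemma3-1} (which works, once you note $f_{X_{\ge k}}(x_k)\ge f_{Y_{\ge k}}(y_k)$ follows by induction from Lemma~\ref{lemma3-6}).

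The real gap is your argument for the \emph{strict} inequalities $f_T(y_i)>f_T(x_{i+1})$. Neither of your two explanations is correct. The sentence ``passing from $x_{i+1}$ to $y_i$ one gains the whole component $X_{i+1}$'' has no precise meaning here: $y_i$ and $x_{i+1}$ are far apart on $P$, and subtrees through either vertex can reach $X_{i+1}$. Your alternative, ``noting a strict inequality at one coordinate'' of Lemma~\ref{lemma3-1}, also fails as stated, because Lemma~\ref{lemma3-6} only gives the \emph{weak} chain $B_j\ge A_{j+1}$ at the interior coordinates. The actual source of strictness is the asymmetry at the leaf end: one shows inductively that $f_{Y_{\ge j}}(y_j)>f_{X_{\ge j+1}}(x_{j+1})$ for all $j\le m-1$, the base case being $f_{Y_{\ge m-1}}(y_{m-1})=B_{m-1}(1+B_m)\ge 2>1=f_{X_{\ge m}}(x_m)$. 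This strict tail comparison is the ``one coordinate'' you need in Lemma~\ref{lemma3-1}. The paper does exactly this, phrased as a contradiction: assume $f_T(y_k)=f_T(x_{k+1})$, peel off factors to get $f_{Y_{\ge k+1}}(y_{k+1})=f_{X_{\ge k+2}}(x_{k+2})$, iterate to $f_{Y_{\ge m-1}}(y_{m-1})=f_{X_{\ge m}}(x_m)$, and observe $2\le 1$. The same mechanism (with sides swapped and the extra vertex $x_{m+1}$ supplying the base-case asymmetry) drives the strict inequalities $f_T(x_k)>f_T(y_k)$ in part~(ii). Once you insert this argument, your outline goes through; without it the strict inequalities are unproved.
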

\begin{proof}
 We only prove part (i), part (ii) is similar.

For any $2\le k\le m$,
 let $W_{k-1}$ be the connected component of $T$ containing vertices $x_{k-1}$ and
 $y_{k-1}$ after removing the
 edges $x_{k-1}x_k$ and $y_{k-1}y_k$.
For $k=1$ and $k=m$, it is easy to see
$$f_T(x_1)-f_T(y_1)=f_{X_1}(x_1)(1+f_{X\ge 2}(x_2))-f_{Y_1}(y_1)(1+f_{Y\ge
 2}(y_2))$$
 and
 $$f_T(x_m)-f_T(y_m)=f_{X_m}(x_m)(1+f_{W_{m-1}}(x_{m-1}))-f_{Y_m}(y_m)(1+f_{W_{m-1}}(y_{m-1})).
$$
  Moreover,
\begin{equation}\label{lemma3-9-1}
f_T(x_k)=f_{X_k}(x_k)(1+f_{X_{\ge
k+1}}(x_{k+1}))(1+f_{W_{k-1}}(x_{k-1})+
 f_{W_{k-1}}(x_{k-1}, \cdots, y_{k-1})f_{Y_k}(y_k)(1+f_{Y_{\ge
 k+1}}(y_{k+1})))
 \end{equation}
 and
 \begin{equation}\label{lemma3-9-2}
f_T(y_k)=f_{Y_k}(y_k)(1+f_{Y_{\ge
k+1}}(y_{k+1}))(1+f_{W_{k-1}}(y_{k-1})+
 f_{W_{k-1}}(y_{k-1}, \cdots, x_{k-1})f_{X_k}(x_k)(1+f_{X_{\ge
 k+1}}(x_{k+1}))).
 \end{equation}
By equations (\ref{lemma3-9-1}) and (\ref{lemma3-9-2}), we
have
\begin{eqnarray}\label{lemma3-9-3}
f_T(x_k)-f_T(y_k)&=&f_{X_k}(x_k)(1+f_{W_{k-1}}(x_{k-1}))(1+f_{X_{\ge
k+1}}(x_{k+1}))\nonumber \\
&&-f_{Y_k}(y_k)(1+f_{W_{k-1}}(y_{k-1}))(1+f_{Y_{\ge k+1}}(y_{k+1})).
\end{eqnarray}

Now we claim that for $1\le k\le m-1$,
\begin{equation}\label{lemma3-9-4}
f_{X_{\ge k+1}}(x_{k+1})\ge f_{Y_{\ge k+1}}(y_{k+1}),
\end{equation}
If  there is at least one strict inequality  in $f_{X_i}(x_i)\ge
f_{Y_i}(y_i)$ for $i=1, \cdots, k$, then by Lemma~\ref{lemma3.4},
 (\ref{lemma3-9-4}) holds.

If $f_{X_i}(x_i)= f_{Y_i}(y_i)$ for $i=1,
\cdots, k$ and there exists a $k<l<m$ such that $f_{X_i}(x_i)=
f_{Y_i}(y_i)$ for $i=1, \cdots, l-1$ and $f_{X_l}(x_l)>
f_{Y_l}(y_l)$.  Then by Lemma~\ref{lemma3.4}, we have $f_{X_{\ge
l+1}}(x_{l+1})\ge f_{Y_{\ge l+1}}(y_{l+1})$. Moreover,
\begin{equation}\label{lemma3-9-5}
f_{X_{\ge
k+1}}(x_{k+1})=\sum_{j=k+1}^l\prod_{i=k+1}^jf_{X_{i}}(x_{i})+
f_{X_{\ge l+1}}(x_{l+1})\prod_{i=k+1}^lf_{X_{i}}(x_{i})
\end{equation}
and
\begin{equation}\label{lemma3-9-6}
f_{Y_{\ge
k+1}}(y_{k+1})=\sum_{j=k+1}^l\prod_{i=k+1}^jf_{Y_{i}}(y_{i})+
f_{Y_{\ge l+1}}(y_{l+1})\prod_{i=k+1}^lf_{Y_{i}}(y_{i}).
\end{equation}
By  equations (\ref{lemma3-9-5}) and (\ref{lemma3-9-6}), the
claim holds.

If  $f_{X_i}(x_i)= f_{Y_i}(y_i)$ for $i=1, \cdots, m$,
then by equations (\ref{lemma3-9-5}) and (\ref{lemma3-9-6}), we have
$f_{X_{\ge k+1}}(x_{k+1})= f_{Y_{\ge k+1}}(y_{k+1})$ and the claim
holds.

Hence (\ref{lemma3-9-4}) is proved.

On the other hand,  by Lemma~\ref{lemma3-1}, we have
$f_{W_{k-1}}(x_{k-1})\ge f_{W_{k-1}}(y_{k-1})$.  Together
with (\ref{lemma3-9-4}), we see that $(\ref{lemma3-9-3})\ge 0$. Then $f_T(x_k)\ge f_T(y_k)$.

Now we prove $f_T(y_k)\ge f_T(x_{k+1})$ for any $1\le k\le m-1$. Let
$U_k$ be the connected component of $T$ containing
vertex $x_k$ after removing the edges
$y_{k-1}y_k$ (if $k=1$, let $y_0=x_1$) and $x_kx_{k+1}$. Then
\begin{equation}\label{lemma3-9-7}
f_T(y_k)=f_{Y_k}(y_k)(1+f_{Y_{\ge
k+1}}(y_{k+1}))(1+f_{U_k}(y_{k-1})+
 f_{U_k}(y_{k-1}, \cdots, x_{k})f_{X_{k+1}}(x_{k+1})(1+f_{X_{\ge
 k+2}}(y_{k+2})))
\end{equation}
and
\begin{equation}\label{lemma3-9-8}
f_T(x_{k+1})=f_{X_{k+1}}(x_{k+1})(1+f_{X_{\ge
k+2}}(x_{k+2}))(1+f_{U_k}(x_{k})+
 f_{U_k}(x_{k}, \cdots, y_{k-1})f_{Y_{k}}(y_{k})(1+f_{Y_{\ge
 k+1}}(y_{k+1}))).
\end{equation}
Similar to (\ref{lemma3-9-4}), we can show that $f_{Y_{\ge
 k+1}}(y_{k+1})\ge f_{X_{\ge
 k+2}}(x_{k+2})$. By Lemma~\ref{lemma3-1}, we have $f_{U_k}(y_{k-1})\ge
 f_{U_k}(x_{k})$. Hence (\ref{lemma3-9-7}) and
 (\ref{lemma3-9-8}) imply that
\begin{eqnarray}\label{lemma3-9-8-1}
f_T(y_k)-f_T(x_{k+1})&=& f_{Y_k}(y_k)(1+
f_{U_k}(y_{k-1}))(1+f_{Y_{\ge
k+1}}(y_{k+1})) \nonumber \\
&&-f_{X_{k+1}}(x_{k+1})(1+f_{U_k}(x_{k}))(1+f_{X_{\ge k+2}}(x_{k+2})) \nonumber \\
&=& f_{Y_{\ge k}}(y_{k})(1+ f_{U_k}(y_{k-1}))-f_{X_{\ge
k+1}}(x_{k+1})(1+f_{U_k}(x_{k}))\ge  0.
\end{eqnarray}
Moreover, if $f_T(x_k)=f_T(y_k)$ for some $1\le k\le m$, then by
(\ref{lemma3-9-3}), we have

 \begin{equation}\label{lemma3-9-9}
 f_{X_k}(x_k)=f_{Y_k}(y_k),\ \ f_{X_{\ge k}}(x_k)=f_{Y_{\ge
 k}}(y_k),\ \
 f_{W_{k-1}}(x_{k-1})=f_{W_{k-1}}(y_{k-1}).
 \end{equation}
 Since $f_{X_{\ge k}}=f_{X_k}(x_k)(1+f_{X_{\ge k+1}}(x_{k+1}))$ and
$f_{Y_{\ge k}}=f_{Y_k}(y_k)(1+f_{Y_{\ge k+1}}(y_{k+1}))$, we have $
f_{X_{\ge k+1}}(x_{k+1})=f_{Y_{\ge
 k+1}}(y_{k+1})$ by (\ref{lemma3-9-9}).
 On the other hand, since
$$f_{W_k}(x_k)=f_{X_k}(x_k)(1+f_{W_{k-1}}(x_{k-1})+f_{W_{k-1}}(x_{k-1},
\cdots, y_{k-1})f_{Y_k}(y_k))$$ and
$$f_{W_k}(y_k)=f_{Y_k}(y_k)(1+f_{W_{k-1}}(y_{k-1})+f_{W_{k-1}}(y_{k-1},
\cdots, x_{k-1})f_{X_k}(x_k)),$$ we have $f_{W_k}(x_k)=f_{W_k}(y_k)$
by (\ref{lemma3-9-9}). Hence
\begin{eqnarray}\label{lemma3-9-10}
f_T(x_{k+1})&=& f_{X_{\ge k+1}}(x_{k+1})(1+f_{W_{k}}(x_k)+f_{W_{k}}(x_{k},
\cdots, y_{k})f_{Y_{\ge k+1}}(y_{k+1}))  \nonumber \\
 &=& f_{Y_{\ge k+1}}(y_{k+1})(1+f_{W_{k}}(y_k)+f_{W_{k}}(x_{k},
\cdots, y_{k})f_{X_{\ge k+1}}(x_{k+1}))=f_T(y_{k+1}).
\end{eqnarray}
Therefore we have $f_T(x_{i})=f_T(y_{i})$ for $i=k, \cdots, m$.

Finally, we prove that $f_T(y_i)>f_T(x_{i+1})$ for $i=1, \cdots,
m-1$.  Suppose that $f_T(y_k)=f_T(x_{k+1})$ for some $1\le k\le m$.
Then by equation (\ref{lemma3-9-8-1}), we have $
f_{Y_k}(y_k)=f_{X_{k+1}}(x_{k+1})$ and  $f_{Y_{\ge
k}}(y_k)=f_{X_{\ge k+1}}(x_{k+1})$.
 Moreover,
$$f_{Y_k}(y_k)(1+f_{Y_{\ge k+1}}(y_{k+1}))=f_{Y_{\ge
k}}(y_k)=f_{X_{\ge k+1}}(x_{k+1})=f_{X_{k+1}}(x_{k+1})(1+f_{X_{\ge
k+2}}(x_{k+2})).$$
 Hence $f_{Y_{\ge
k+1}}(y_{k+1})=f_{X_{\ge k+2}}(x_{k+2})$.  Continuing this way in an inductive manner, we
have $f_{Y_{\ge m-1}}(y_{m-1})=f_{X_{\ge m}}(x_m)$. But
$f_{Y_{\ge m-1}}(y_{m-1})\ge 2 $ and $f_{X_{\ge m}}(x_m)=1$,
contradiction.

Combining the above results, we have proved part (i).
\end{proof}

The next Lemma relates the number of subtrees to the structure of the tree.

\begin{lemma}\label{lemma3-7}
For a path
$P(x_m,y_m)=x_{m}x_{m-1}\dots x_{2}x_{1}(z)y_{1}y_{2}\dots
y_{m-1}y_m $ in an optimal tree $T$, if $f_{X_{i}}(x_i)\ge
f_{Y_{i}}(y_{i})$ for $i=1, \cdots, k$,
 $1\le k\le m-1$, then $d(x_{k})\geq d(y_{k})$.

Moreover, if  $f_{X_{i}}(x_i)= f_{Y_{i}}(y_{i})$ for $i=1, \cdots,
k$,
 $1\le k\le m-1$, then $d(x_k)=d(y_k)$.\end{lemma}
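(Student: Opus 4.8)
The plan is to reduce the degree comparison to the subtree-count comparisons already established in Lemmas~\ref{lemma3.4} and~\ref{lemma3-9}, using a local exchange argument at the vertices $x_k$ and $y_k$. First I would set up the situation: assume $f_{X_i}(x_i)\ge f_{Y_i}(y_i)$ for $i=1,\dots,k$ with $1\le k\le m-1$, and suppose toward a contradiction that $d(x_k)<d(y_k)$. The idea is that since $y_k$ has strictly more neighbors than $x_k$, one of the ``branch'' subtrees hanging off $y_k$ (a neighbor of $y_k$ other than $y_{k-1}$ and $y_{k+1}$, together with everything behind it) can be detached from $y_k$ and reattached at $x_k$; this keeps the degree sequence $\pi$ unchanged (we are just redistributing which vertex is the parent of that branch) and produces a new tree $T'\in\mathcal{T}_\pi$. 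I would then show $\varphi(T')>\varphi(T)$, contradicting optimality of $T$.

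The heart of the argument is the count of $\varphi(T')-\varphi(T)$. Write $Z$ for the branch being moved, rooted at its attaching vertex $z'$, with the convention that $f_Z(z')$ counts the subtrees of $Z$ containing $z'$. Let $N$ be the number of subtrees of $T$ avoiding both $x_k$ and $y_k$ (this number is unaffected by the move), and let $W$ be the component of $T$ containing $x_k$ and $y_k$ after deleting $x_kx_{k+1}$ and $y_ky_{k+1}$. As in the proofs of Lemmas~\ref{lemma3.4} and~\ref{lemma3-9}, the subtrees of $T$ containing $x_k$ or $y_k$ factor through the quantities $f_{X_{\ge k+1}}(x_{k+1})$, $f_{Y_{\ge k+1}}(y_{k+1})$, $f_Z(z')$, and the $f_W$-terms $f_W(x_k)$, $f_W(y_k)$, $f_W(x_k,y_k)$. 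Carrying out the bookkeeping, $\varphi(T')-\varphi(T)$ comes out proportional to $f_Z(z')\bigl(f_T(x_k)-f_T(y_k)\bigr)$ after the move (more precisely, to a difference of the form $\bigl(f_Z(z')-1\bigr)$ times a combination that reduces, via the identities in the proof of Lemma~\ref{lemma3-9}, to $f_T^W(x_k)-f_T^W(y_k)$ where these are the relevant restricted counts). By Lemma~\ref{lemma3-1} applied inside $W$ — using $f_{X_i}(x_i)\ge f_{Y_i}(y_i)$ for $i\le k-1$ — we get $f_W(x_k)\ge f_W(y_k)$, and combined with $f_{X_k}(x_k)\ge f_{Y_k}(y_k)$ this forces the relevant difference to be $\ge 0$, with the factor $f_Z(z')\ge 2$ (since $Z$ has at least one vertex besides $z'$, as $d(y_k)\ge 2$ means the moved branch is nonempty). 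Hence $\varphi(T')\ge\varphi(T)$, and I would argue the inequality is strict because $f_Z(z')>1$ strictly, giving the contradiction. For the ``moreover'' part, when all $f_{X_i}(x_i)=f_{Y_i}(y_i)$ for $i\le k$, a symmetric move in the other direction shows $d(y_k)<d(x_k)$ is also impossible, so $d(x_k)=d(y_k)$.

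The main obstacle I anticipate is making the exchange argument genuinely degree-preserving and correctly identifying the sign of $\varphi(T')-\varphi(T)$: the branch $Z$ moved from $y_k$ to $x_k$ changes the local structure, and one must verify that the resulting graph is still a tree with the same degree sequence (the moved neighbor's own degree is unchanged; only $d(x_k)$ goes up by one and $d(y_k)$ down by one, so $\pi$ is preserved iff we simultaneously — no: actually $\pi$ as a multiset is preserved precisely because we could equally relabel, but one must be careful here). The cleanest route is probably to move a branch from $y_k$ and a branch of equal or comparable ``weight'' is not available, so instead I would phrase it as: delete the edge $y_k w$ (for a suitable neighbor $w$ of $y_k$) and add the edge $x_k w$; this changes degrees only at $x_k$ (up by $1$) and $y_k$ (down by $1$), so to land back in $\mathcal{T}_\pi$ one needs $d(y_k)-1\ge\dots$; in fact the correct and standard fix is that since the lemma only needs a contradiction with optimality, and optimality is within $\mathcal{T}_\pi$, one instead compares with the tree obtained by an edge-swap that preserves $\pi$ exactly — swapping the branch at $x_k$ with the branch at $y_k$ as in Lemma~\ref{lemma3.3}'s construction — and shows $d(x_k)<d(y_k)$ contradicts the already-proven monotonicity $f_T(x_k)\ge f_T(y_k)$ (Lemma~\ref{lemma3-9}) together with the BFS/structural constraints. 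I would therefore lean on Lemma~\ref{lemma3-9}'s conclusion $f_T(x_k)\ge f_T(y_k)$ as the engine and translate it into the degree inequality via the explicit formula for $f_T(v)$ in terms of $d(v)$ and the branch counts, where a larger degree with smaller branch counts cannot compensate — this dictionary between $f_T$ and $d$ is the step requiring the most care.
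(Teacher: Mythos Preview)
Your proposal circles the right idea---detach something from $y_k$ and reattach it at $x_k$---but never lands in $\mathcal{T}_\pi$. Moving a single branch changes $d(x_k)$ and $d(y_k)$ by $\pm 1$, so the degree multiset changes; you flag this yourself, but then retreat to alternatives that do not close the gap. Invoking Lemma~\ref{lemma3.3}'s swap does not help because that swap exchanges $X_{\ge k+1}$ and $Y_{\ge k+1}$, not the local branches of $X_k$ versus $Y_k$, so it says nothing about $d(x_k)$ versus $d(y_k)$. And the fallback plan of reading $d(x_k)\ge d(y_k)$ off of $f_T(x_k)\ge f_T(y_k)$ via ``the explicit formula for $f_T(v)$ in terms of $d(v)$'' is not an argument: a vertex of higher degree with many tiny branches can certainly have smaller $f_T$ than a vertex of lower degree with one large branch, so there is no such dictionary.

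The missing idea is short and is exactly what the paper does: move \emph{all} $r:=d(y_k)-d(x_k)$ excess branches from $y_k$ to $x_k$ at once. This swaps the degrees of $x_k$ and $y_k$, so the multiset $\pi$ is preserved and the new tree lies in $\mathcal{T}_\pi$. Moreover this is precisely the setup of Lemma~\ref{lemma3-2}: take $W$ to be $T$ with those $r$ edges deleted, $X$ the single vertex $x_k$ (so $f_X(x_k)=1$), and $Y$ the $r$ detached branches together with $y_k$ (so $f_Y(y_k)\ge 2$). The hypothesis $f_{X_i}(x_i)\ge f_{Y_i}(y_i)$ for $i\le k$ gives $f_W(x_k)\ge f_W(y_k)$ (argue via Lemma~\ref{lemma3-1} along the path from $x_k$ to $y_k$ inside $W$), and since $f_X(x_k)<f_Y(y_k)$ strictly, Lemma~\ref{lemma3-2} yields $\varphi(T')>\varphi(T)$, contradicting optimality. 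Your treatment of the ``moreover'' clause by a symmetric move is fine and matches the paper.
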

\begin{proof} Suppose that $d(x_{k})< d(y_{k})$, let $r=d(y_{k})-d(x_{k})\ge 1$ and
$y_{k}u_i\in Y_{\ge k}$ for $i=1, \cdots, r$.

Further let $W$ be
the connected component of $T$ containing vertices $x_{k}$ and $y_{k}$ after
 removing the $r$ edges $y_{k}u_1,
\cdots, y_{k}u_r$. Let $X$
be the single vertex $x_{k}$ and let $Y$ be the connected component of
$T$ containing vertex $y_{k}$ after removing all edges incident to $y_k$
 except for the $r$ edges
$y_{k}u_1, \cdots, y_{k}u_r$.
Since $f_{X_i}(x_i)\ge f_{Y_i}(y_i)$ for $i=1, \cdots, k$, it is easy to
see that $f_W(x_{k})>f_W(y_{k})$ and $f_X(x_{k})=1< 2\le
f_{Y}(y_{k})$. By Lemma~\ref{lemma3-2}, there exists another
tree $T^{\prime}\in \mathcal{T}_{\pi}$ such that
$\varphi(T)<\varphi(T^{\prime})$, contradicting to the optimality of $T$.

Therefore the assertion holds. The case of equality is similar.
\end{proof}

From
Lemmas~\ref{lemma3-6}, \ref{lemma3-9} and \ref{lemma3-7} we have the following Lemma that
decides the `center' of the optimal tree.

\begin{lemma}\label{lemma3-10}
Let $T$ be an optimal tree  in $\mathcal{T}_{\pi}$ . If $f_T
(v_{0})=\max\{f_T (v), v\in V(T)\}$, then $d(v_0)=\max\{d(v), v\in
V(T)\}$.
\end{lemma}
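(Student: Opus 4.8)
The plan is to argue by contradiction: suppose $v_0$ maximizes $f_T$ but $d(v_0)$ is not the maximum degree, so there is a vertex $w$ with $d(w)>d(v_0)$. First I would root $T$ at $v_0$ and consider the path $P=P_T(v_0,w)$ joining them. To apply the earlier machinery I want to embed $P$ inside a path whose endpoints are leaves, so I would extend $P_T(v_0,w)$ on each side to a maximal path $P'$ ending in leaves; by Lemma~\ref{lemma3-6} the vertices of $P'$ admit a labelling $x_m\cdots x_1(z)y_1\cdots y_m$ (or the even-length analogue) with
$$f_{X_1}(x_1)\ge f_{Y_1}(y_1)\ge f_{X_2}(x_2)\ge f_{Y_2}(y_2)\ge\cdots,$$
and by Lemma~\ref{lemma3-9} the corresponding $f_T$-values satisfy $f_T(x_1)\ge f_T(y_1)> f_T(x_2)\ge\cdots$, i.e.\ they are (weakly) decreasing as one moves away from the "central" vertex $x_1$.

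The key observation is then that $x_1$ (the start of the labelled path) must be the $f_T$-maximizer on $P'$, hence $f_T(x_1)\ge f_T(u)$ for every $u$ on $P'$; in particular $f_T(x_1)=\max_{v\in V(T)}f_T(v)=f_T(v_0)$, and since Lemma~\ref{lemma3-9} gives that the $f_T$-values strictly decrease except possibly along a terminal constant block, the vertex where the maximum of $f_T$ on $P'$ is attained can be taken to be $x_1$ (if $f_T(v_0)$ is attained at several vertices of $P'$ they form a contiguous block containing $x_1$, and I may relabel so that $v_0=x_1$). Now Lemma~\ref{lemma3-7}, applied with $k=1$ (and using that $f_{X_1}(x_1)\ge f_{Y_1}(y_1)$, with the chain of inequalities from Lemma~\ref{lemma3-6} propagating $f_{X_i}(x_i)\ge f_{Y_i}(y_i)$ for all $i$ along the path), yields $d(x_k)\ge d(y_k)$ for every $k$ — so the degrees are also non-increasing as one moves along $P'$ away from $x_1=v_0$. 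Since $w$ lies on this path, $d(w)\le d(v_0)$, contradicting $d(w)>d(v_0)$.

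The one subtlety I expect to be the main obstacle is the bookkeeping needed to guarantee that the global $f_T$-maximizer $v_0$ really is (or may be chosen as) the endpoint $x_1$ of the Lemma~\ref{lemma3-6}-labelled path, rather than some interior vertex: a priori $v_0$ need not even lie on the particular leaf-to-leaf path $P'$ I picked. To handle this cleanly I would instead choose $P'$ to be a maximal (leaf-ended) path \emph{through} $v_0$ and argue that the labelling of Lemma~\ref{lemma3-6} forces $x_1$ to be a vertex of $P'$ with $f_T(x_1)\ge f_T(y)$ for all $y\in V(P')$; combined with the definition of $v_0$ this gives $f_T(x_1)=f_T(v_0)$, and the "moreover" clauses of Lemmas~\ref{lemma3-9} and~\ref{lemma3-7} let me slide $v_0$ along the constant block to identify it with $x_1$ without loss of generality. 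Once that identification is in place, the degree monotonicity from Lemma~\ref{lemma3-7} finishes the argument immediately. Everything else is a direct citation of the preceding lemmas, so the write-up should be short.
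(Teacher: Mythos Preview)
Your proposal is correct and follows essentially the same route as the paper: suppose $d(w)>d(v_0)$ for some $w$, extend the path $P_T(v_0,w)$ to a leaf-to-leaf path, invoke Lemma~\ref{lemma3-6} for the component ordering, Lemma~\ref{lemma3-9} for the $f_T$-ordering, and Lemma~\ref{lemma3-7} for the degree ordering, reaching a contradiction. The only noteworthy difference is in how the identification $v_0=x_1$ is justified: the paper quotes a result of Sz\'ekely--Wang (that $f_T$ is maximized at one vertex or at two adjacent vertices) and then splits into two explicit cases, whereas you extract the same conclusion directly from the strict inequalities in Lemma~\ref{lemma3-9} together with its ``moreover'' clause (which forces $v_0\in\{x_1,y_1\}$ and, when $v_0=y_1$, gives $d(x_1)=d(y_1)$). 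Both handlings are valid; your phrasing ``slide $v_0$ along the constant block'' should be replaced in the write-up by the precise statement that $f_T(x_1)=f_T(y_1)$ forces $d(x_1)=d(y_1)$ via the equality clauses, so the choice between $x_1$ and $y_1$ is immaterial.
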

\begin{proof}  The assertion clearly holds for small trees, so we
assume that $|V(T)|\ge 4$. Suppose that $d(v_0)<\max\{d(v), v\in V(T)\}$.
Then there exists a vertex $w$ such that $d(v_0)<d(w)$.  By
Theorem~9.1 in \cite{Szelely2005}, $f_T(v)$ is maximized at one or two adjacent vertices of $T$. Thus we have $f_T(v_0)>f_T(v)$ for
$v\in V(T)\setminus\{v_0\}$,  or  $f_T(v_0)=f_T(v_1)>f_T(v)$ for
$v\in V(T)\setminus\{v_0, v_1\}$ and $v_0v_1 \in E(T)$.

\noindent {\bf Case 1:} $f_T(v_0)>f_T(v)$ for $v\in V(T)\setminus\{v_0\}$. Hence,
 $f_T(v_0)>f_T(w)$. It is
 easy to see that $v_0$ is not a leaf (otherwise, let $u$ be a neighbor of $v_0$
 and we have
 $f_T(u)>f_T(v_0)$).
Let $P$ be a path containing vertex $v_0$ and $w$ whose end vertices
are leaves. Let the length of $P$ be $2m-1$ (the even length case is similar). Then by
Lemma~\ref{lemma3-6}, the vertices of $P$ can be labeled as
$P=x_m\cdots x_1y_1\cdots y_m$ such that $$f_{X_1}(x_1)\ge
f_{Y_1}(y_1)\ge f_{X_2}(x_2)\ge f_{Y_2}(y_2)\ge \cdots \ge
f_{X_m}(x_m)=f_{Y_m}(y_m)=1.$$ Hence by Lemma~\ref{lemma3-9}, we
have
$$f_{T}(x_{1})\geq f_{T}(y_{1})\geq f_{T}(x_{2})\geq f_{T}(y_{2})\geq
\dots \geq f_{T}(x_{m})\ge f_{T}(y_{m}).$$ Therefore $x_1$ must be
$v_0$ and $w$ must be $x_k$ for $2\le k\le m$ or $y_j$  for $1\le
j\le m$. By Lemma~\ref{lemma3-7}, we have $d(v_0)=d(x_1)\ge
d(x_k)=d(w)$ or $d(v_0)=d(x_1)\ge d(y_j)=d(w)$,
contradiction. Hence the assertion holds.

\noindent {\bf Case 2:} $f_T(v_0)=f_T(v_1)>f_T(v)$ for $v\in
V(T)\setminus\{v_0, v_1\}$ and $v_0v_1 \in E(T)$. If $w= v_1$,
then by Lemma~\ref{lemma3-7}, we have $d(w)=d(v_1)=d(v_0)<d(w)$, contradiction.

Hence we assume that $w\neq v_1$. First note that $v_0$ and
$v_1$ are not leaves. Let $P$ be a path containing
vertices $v_0$, $v_1$ and $w$ whose end vertices are leaves. Let the length
 of $P$ be $2m-1$ (the even case is similar), then by
Lemma~\ref{lemma3-6}, the vertices of $P$ can be labeled as
$P=x_m\cdots x_1y_1\cdots y_m$ such that
$$f_{X_1}(x_1)\ge f_{Y_1}(y_1)\ge f_{X_2}(x_2)\ge f_{Y_2}(y_2)\ge \cdots
\ge f_{X_m}(x_m)\ge f_{Y_m}(y_m)=1.$$ Hence by Lemma~\ref{lemma3-9},
we have
$$f_{T}(x_{1})\geq f_{T}(y_{1})\geq f_{T}(x_{2})\geq f_{T}(y_{2})\geq
\dots \geq f_{T}(x_{m})\ge f_{T}(y_{m}).$$ Therefore $\{x_1,
y_1\}=\{v_0, v_1\}$ and $w$ must be $x_k$ or $y_k$ for $1<k\le m$.
By Lemma~\ref{lemma3-7}, $d(v_0)\ge d(w)$ and $d(v_1)\ge d(w)$,
contradiction.

Combining cases (1) and (2), the assertion is proved.
\end{proof}

\begin{lemma}\label{lemma3-11} Let $T$ be an optimal tree in
$\mathcal{T}_{\pi}$.  If there is a path  $P=u_lu_{l-1}\cdots u_1
v_0v_1\cdots v_k$ with $f_T(v_0)=\max\{f_T(v):\ \  v\in V(P)\}$,
$f_{T}(u_1)\ge f_T({v_1})$, and $l=k$ (or \ $l= k+1$),  then
$$f_T(u_1)\ge f_T(v_1)\ge f_{T}(u_2)\ge\cdots \ge f_T({u_k})\ge f_T(v_k)
\hbox{ }({\rm or} \ \ \ge f_T(u_{k+1}))$$
and
$$d(u_1)\ge d(v_1)\ge d(u_2)\ge\cdots\ge d(u_k)\ge d(v_k) \hbox{ }({\rm or} \ \ \ge
  d(u_{k+1})).$$
\end{lemma}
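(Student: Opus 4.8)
The plan is to reduce the statement to an application of Lemma~\ref{lemma3-6} followed by Lemma~\ref{lemma3-9} and Lemma~\ref{lemma3-7}, exactly as in the proof of Lemma~\ref{lemma3-10}. First I would extend the given path $P=u_lu_{l-1}\cdots u_1v_0v_1\cdots v_k$ to a path $\widetilde P$ whose two end vertices are leaves of $T$; this is possible because any path of a tree lies on some leaf-to-leaf path, and by choosing $\widetilde P$ to contain $P$ we may arrange (after possibly discarding the portion of $\widetilde P$ beyond $u_l$ on one side, or working with the maximal extension) that $v_0$ is still the vertex of maximum $f_T$-value on $\widetilde P$ — here I would invoke Theorem~9.1 of \cite{Szelely2005}, which guarantees $f_T(\cdot)$ is maximized on $T$ at one vertex or two adjacent vertices, so that along $\widetilde P$ the values $f_T$ are unimodal and $v_0$ (or $\{v_0,v_1\}$) is the peak.

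Next I would apply Lemma~\ref{lemma3-6} to $\widetilde P$: depending on the parity of its length, its vertices can be relabeled as $x_mx_{m-1}\cdots x_1(z)y_1y_2\cdots y_m$ (or with an extra $x_{m+1}$) so that
$$f_{X_1}(x_1)\ge f_{Y_1}(y_1)\ge f_{X_2}(x_2)\ge\cdots$$
with the terminal terms equal to $1$. Since $v_0$ is the peak, the labeling forces $v_0=x_1$ (or $\{v_0,v_1\}=\{x_1,y_1\}$ in the two-peak case). The hypothesis $f_T(u_1)\ge f_T(v_1)$ together with the symmetry $u_1\leftrightarrow x_2$-side versus $v_1\leftrightarrow y_1$-side pins down which of the two branches emanating from $v_0$ in $\widetilde P$ is the $x$-branch and which is the $y$-branch: the $u$-side must be identified so that $u_j$ corresponds to $x_{j+1}$ and $v_j$ corresponds to $y_j$ (when $l=k$), or a suitable shift when $l=k+1$. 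Then Lemma~\ref{lemma3-9}(i) (or (ii)) applied to $\widetilde P$ yields the chain
$$f_T(x_1)\ge f_T(y_1)\ge f_T(x_2)\ge\cdots,$$
which translates back, under the identification just described, into $f_T(u_1)\ge f_T(v_1)\ge f_T(u_2)\ge\cdots\ge f_T(u_k)\ge f_T(v_k)$ (or $\ge f_T(u_{k+1})$), the first asserted inequality chain.

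For the degree inequalities I would now invoke Lemma~\ref{lemma3-7}. By Lemma~\ref{lemma3-6} we also have the chain $f_{X_i}(x_i)\ge f_{Y_i}(y_i)\ge f_{X_{i+1}}(x_{i+1})\ge\cdots$ of branch-subtree counts along $\widetilde P$; feeding consecutive truncations of this chain into Lemma~\ref{lemma3-7} gives $d(x_1)\ge d(y_1)\ge d(x_2)\ge\cdots$, i.e. $d(u_1)\ge d(v_1)\ge d(u_2)\ge\cdots\ge d(u_k)\ge d(v_k)$ (or $\ge d(u_{k+1})$) after translating back. The main obstacle I anticipate is bookkeeping the relabeling correctly: I must verify that extending $P$ to a leaf-to-leaf path does not destroy the property that $v_0$ is the peak (the possibility of a second peak $v_1$ and the $l=k$ versus $l=k+1$ dichotomy both need separate care), and that the branch counts $f_{U_i}(u_i)$, $f_{V_i}(v_i)$ used here genuinely coincide with the $f_{X_i}(x_i)$, $f_{Y_i}(y_i)$ of the extended path — i.e. that extending the path on the far side of $u_l$ or $v_k$ only enlarges the relevant components in a way compatible with the hypotheses $f_T(u_1)\ge f_T(v_1)$ and $l=k$ (or $l=k+1$). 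Once the identification is fixed, the inequalities are immediate consequences of the cited lemmas with no further computation.
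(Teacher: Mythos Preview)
Your approach coincides with the paper's: extend $P$ to a leaf-to-leaf path $Q$, apply Lemmas~\ref{lemma3-6} and~\ref{lemma3-9} to obtain the ordered chain $f_T(x_1)\ge f_T(y_1)>f_T(x_2)\ge\cdots$ along $Q$, locate $v_0$ as $x_1$ or $y_1$ by a short case analysis (using Theorem~9.1 of \cite{Szelely2005} to exclude $v_0=x_i$ for $i>1$), and then read off both the $f_T$-chain and, via Lemma~\ref{lemma3-7}, the degree chain. Your only slip is the explicit identification: when $v_0=x_1$ its neighbors on $Q$ are $y_1$ and $x_2$ with $f_T(y_1)>f_T(x_2)$, so the hypothesis $f_T(u_1)\ge f_T(v_1)$ forces $u_i=y_i$ and $v_i=x_{i+1}$ (the reverse of what you wrote), and correspondingly in the two-peak case the second peak is $u_1$, not $v_1$.
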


\begin{proof}
Clearly, there exists  a path $Q$ that contains the path $P$
and its end vertices are leaves. We assume  $l=k$ (the $l=k+1$ case is similar).

Let the length of $Q$ be $2m-1$ (the even length case is similar). By
Lemmas~\ref{lemma3-9} and \ref{lemma3-7}, The vertices of $Q$ can be labeled as
$Q=x_{m}x_{m-1}\cdots x_1y_1\cdots y_m$ such that
$$f_{T}(x_{1})\geq
f_{T}(y_{1})> f_{T}(x_{2})\ge f_{T}(y_{2})> \dots >
f_{T}(x_{m})\ge f_{T}(y_{m}) $$and
$$
d(x_1)\ge d(y_1)\ge d(x_2)\ge d(y_2)\ge \cdots \ge
d(x_m)=d(y_m)=1.$$

{\bf Case 1:} $v_0=x_1$. We must have $u_1=y_1$ and $v_1=x_2$. Then $u_i=y_i$ and
$v_i=x_{i+1}$ for $i=1, \cdots, k$. Hence the assertion holds.

{\bf Case 2:} $v_0=x_i$ for $i>1$. Then $f_T(v_0)\ge
f_T(x_1)\ge f_T(y_1)\ge f_T(x_i)=f_T(v_0)$, which implies
$f_T(x_1)=f_T(y_1)=f_T(v_0)$ and contradicts to Theorem~9.1 in
\cite{Szelely2005}.

{\bf Case 3:} $v_0=y_i$. Then $i=1$ and
$f_T(x_1)=f_T(y_1)=f_T(v_0)$.  We must have $u_1=x_1$ and $ v_1=y_2$. Then $u_i=x_i$ and $v_i=y_{i+1}$
for $i=1, \cdots, k$. So the assertion holds.
\end{proof}

Now for an optimal tree $T$ in
$\mathcal{T}_{\pi}$, let $v_0\in V(T)$ be the root of $T$  with
$f_T(v_0)=\max\{f_T(v):\ \  v\in V(T)\}$ and $d(v_0)=\max\{d(v):\ \
v\in V(T)\}$.

\begin{corollary}\label{corollary3-12}
If there is  a path  $ P=u_k\cdots u_1wv_1v_2\cdots v_k$ with
$dist(u_k, v_0)=dist(v_k,v_0)=dist(w, v_0)+k$ and $f_T(u_1)\ge
f_T(v_1)$, then
$$f_T(u_1)\ge f_T(v_1)\ge f_{T}(u_2)\ge\cdots \ge f_T({u_k})\ge f_T(v_k)$$
and
$$d(u_1)\ge d(v_1)\ge d(u_2)\ge\cdots\ge d(u_k)\ge d(v_k).$$

If there is  a path  $ P=u_{k+1}\cdots u_1wv_1v_2\cdots v_k$
with $dist(u_{k+1}, v_0)=dist(v_k,v_0)+1=dist(w, v_0)+k+1$ and
$f_T(u_1)\ge f_T(v_1)$, then
$$f_T(u_1)\ge f_T(v_1)\ge f_{T}(u_2)\ge\cdots \ge f_T({u_k})\ge f_T(v_k)\ge f_T(u_{k+1})$$
and
$$d(u_1)\ge d(v_1)\ge d(u_2)\ge\cdots\ge d(u_k)\ge d(v_k)\ge d(u_{k+1}).$$

\end{corollary}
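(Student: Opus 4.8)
The plan is to run, with $w$ in the role that the global centre $v_0$ plays in the proof of Lemma~\ref{lemma3-11}, essentially the same argument; the one new point is that the distance hypotheses, together with optimality, pin $w$ down as the vertex where $f_T$ is maximised along the relevant path. First I would set aside the degenerate cases ($|V(T)|\le 3$, or $k=0$) and the case $w=v_0$, in which the first assertion is literally Lemma~\ref{lemma3-11} with $l=k$ and the second with $l=k+1$; so assume $w\ne v_0$. Extend $P$ to a path $Q$ whose two end vertices are leaves. Since $dist(u_k,v_0)=dist(v_k,v_0)=dist(w,v_0)+k$, in the rooted tree $(T,v_0)$ each of $u_1,\dots,u_k,v_1,\dots,v_k$ lies in the subtree rooted at $w$, being a child of its predecessor, and the two arms added in forming $Q$ from $P$ consist of further descendants of $w$; hence every vertex of $Q$ is $w$ or a proper descendant of $w$, the vertex $w$ is the vertex of $Q$ nearest $v_0$, and $v_0\notin V(Q)$.

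The core step is to show that $w$ is the centre $x_1$ in the canonical labelling of $Q$ provided by Lemma~\ref{lemma3-6}. Treating the case where $Q$ has odd length, Lemma~\ref{lemma3-6}(i) labels $Q=x_m\cdots x_1y_1\cdots y_m$ with $f_{X_1}(x_1)\ge f_{Y_1}(y_1)\ge f_{X_2}(x_2)\ge\cdots$, and then Lemmas~\ref{lemma3-9} and~\ref{lemma3-7}, exactly as in the proof of Lemma~\ref{lemma3-11}, give
$$f_T(x_1)\ge f_T(y_1)>f_T(x_2)\ge f_T(y_2)>\cdots>f_T(x_m)\ge f_T(y_m),\qquad d(x_1)\ge d(y_1)\ge d(x_2)\ge\cdots\ge d(x_m)=d(y_m)=1 .$$
The first chain shows $f_T(x_1)=\max\{f_T(v):v\in V(Q)\}$. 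On the other hand, in an optimal tree one has $f_T(u)>f_T(u')$ whenever $u\ne v_0$ and $u'$ is a child of $u$ (apply Lemma~\ref{lemma3-9} to a leaf-to-leaf path through $v_0$, $u$ and $u'$: on it $f_T$ is maximised at $v_0$ by the definition of $v_0$, via Lemma~\ref{lemma3-10}, and the step from $u$ to $u'$ is one of the strict steps of the chain). Consequently $f_T(w)>f_T(v)$ for every proper descendant $v$ of $w$, so $\max\{f_T(v):v\in V(Q)\}$ is attained at the single vertex $w$, and therefore $x_1=w$. Now the hypothesis $f_T(u_1)\ge f_T(v_1)$ together with $f_T(y_1)>f_T(x_2)$ forces the way $P$ sits inside $Q$, namely $u_i=y_i$ and $v_i=x_{i+1}$ for $i=1,\dots,k$ (the inequality $m-1\ge k$, needed for $x_{k+1}$ to exist, holds because $Q$ contains $P$). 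Substituting into the two displayed chains yields
$$f_T(u_1)\ge f_T(v_1)\ge f_T(u_2)\ge\cdots\ge f_T(u_k)\ge f_T(v_k),\qquad d(u_1)\ge d(v_1)\ge\cdots\ge d(u_k)\ge d(v_k),$$
which is the first assertion. The case where $Q$ has even length, and the second assertion (path $u_{k+1}\cdots u_1wv_1\cdots v_k$, where the $u$-arm is one longer, matching the ``$l=k+1$'' case of Lemma~\ref{lemma3-11} and producing the extra terms $\ge f_T(u_{k+1})$ and $\ge d(u_{k+1})$ at the ends of the chains), are handled in the same way using parts (ii) of Lemmas~\ref{lemma3-6} and~\ref{lemma3-9}.

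The step I expect to require the most care is the identification $x_1=w$: it relies on the strict inequalities in Lemma~\ref{lemma3-9} (to exclude $w$ being an off-centre vertex of $Q$ at which $f_T$ happens to be maximal) and on the fact, read off from Lemma~\ref{lemma3-9} with $v_0$ identified as the $f_T$-maximum through Lemma~\ref{lemma3-10}, that $f_T$ strictly decreases down the branches of $(T,v_0)$, so that the maximum of $f_T$ over $V(Q)$ is attained uniquely. Once that is established, the conclusion is obtained by substituting the known position of $P$ in $Q$ into the two chains already available from the proof of Lemma~\ref{lemma3-11}.
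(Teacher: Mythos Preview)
Your argument is correct. The overall strategy matches the paper's---establish that $f_T(w)=\max\{f_T(v):v\in V(P)\}$ and then invoke Lemma~\ref{lemma3-11} with $w$ playing the role of the centre---but the execution of the first step differs. You extend $P$ itself to a leaf-to-leaf path $Q$ lying entirely in the subtree rooted at $w$, and identify $w=x_1$ by first proving the auxiliary fact that $f_T$ is \emph{strictly} decreasing along every edge directed away from $v_0$ (which you obtain from Lemma~\ref{lemma3-9} applied to a leaf-to-leaf path through $v_0$ and the given edge). The paper instead takes two separate leaf-to-leaf paths, one through $v_0,w,u_1,\dots,u_k$ and one through $v_0,w,v_1,\dots,v_k$; since $v_0$ is on each of these and has the global maximum of $f_T$, Lemma~\ref{lemma3-11} (more precisely, the chain from Lemma~\ref{lemma3-9} with $v_0$ forced to the centre) gives $f_T(w)\ge f_T(u_i)$ and $f_T(w)\ge f_T(v_j)$ directly, whence $f_T(w)$ is maximal on $P$ and Lemma~\ref{lemma3-11} applies once more to $P$ itself. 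The paper's route is slightly more modular (three black-box calls to Lemma~\ref{lemma3-11}); yours is more self-contained and makes explicit the pleasant structural fact that in an optimal tree $f_T$ strictly decreases down every branch away from $v_0$, which is implicit in the paper's argument but not isolated as such.
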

\begin{proof}
If $w=v_0$, then the assertion follows from Lemma~\ref{lemma3-11}.
If $w\neq v_0$, then there exists  a path $Q$ containing vertices
$u_k, \cdots, u_1, w, v_0$ whose end vertices are leaves.
By Lemma~\ref{lemma3-11}, we have $f_T(w)\ge f_T(u_1)\ge\cdots \ge
f_T(u_k)$. Similarly, there exists a path $R$ containing vertices
$v_k, \cdots, v_1, w, v_0$ whose end vertices are leaves and we have $f_T(w)\ge f_T(v_1)\ge\cdots \ge
f_T(v_k)$. Therefore $f_T(w)=\max\{f_T(v): \ v\in V(P)\}$, the
assertion follows from Lemma~\ref{lemma3-11}.
\end{proof}

\section{Proofs of Theorems~\ref{theorem2-1} and \ref{theorem2-2}}

 Now we are ready to prove Theorems~\ref{theorem2-1} and \ref{theorem2-2}.

\begin{proof} of Theorem~\ref{theorem2-1}. Let  $T$ be an optimal tree in ${\mathcal {T}}_{
\pi}$.  By Lemma~\ref{lemma3-10}, there exists a vertex $v_0$ such
that $f_T(v_0)=\max\{f_T(v):\ \ v\in V(T)\}$ and $d(v_0)=\max\{d(v):
\ \ v\in V(T)\}$. Let $v_0$ be the root of $T$ and put $V_i=\{v:\
dist(v, v_0)=i\}$ for $i=0, \cdots, p+1$  with
 $V(T)=\bigcup_{i=0}^{p+1} V_i$.  Denote by $|V_i|=s_i$ for $i=1, \cdots, p+1$.
  We now can relabel the vertices of
 $V(T)$ by the recursion method. For $V_0$, relabel $v_0$ by $v_{01}$ as the
 root of tree $T$.
  The vertices of $V_1$ (consisting of all
 neighbors $v_{01}$) are relabeled as
$v_{11},\cdots, v_{1,s_1}$,  satisfying:
 \begin{eqnarray*}f_T(v_{11})\ge f_T(v_{12})\ge\cdots \ge f_T(v_{1,s_1})\end{eqnarray*}
 and
 \begin{eqnarray*} f_T(v_{1i})=f_T(v_{1j})\ \  {\rm implies }\ \ d(v_{1i})\ge
 d(v_{1j}) \ {\rm for }\ 1\le i<j\le s_1.\end{eqnarray*}
Generally, we assume that all vertices of $V_i$ are relabeled as
$\{v_{i1}, \cdots, v_{i,s_i}\}$ for $i=1,\cdots, t$. Now consider
all vertices in $V_{t+1}$. Since $T$ is tree, it is easy to see that $s_1 = d(v_{01})$ and
\begin{eqnarray*}s_{t+1}=|V_{t+1}|=d(v_{t1})+\cdots+d(v_{t, s_t})-s_t.\end{eqnarray*} Hence
for $1\le r\le s_t$, all neighbors in $V_{t+1}$ of $v_{tr}$ are
relabeled as \begin{eqnarray*}v_{t+1, d(v_{t1})+\cdots+d(v_{t,
r-1})-(r-1)+1}, \cdots, v_{t+1, d(v_{t1})+ \cdots+d(v_{t,
r})-r}\end{eqnarray*}
 and satisfy
the conditions: \begin{eqnarray}f_T(v_{t+1, i})\ge f_T(v_{t+1,
j})\end{eqnarray}
 and
 \begin{eqnarray}
  f_T(v_{t+1,i})=f_T(v_{t+1,j})\ \  {\rm implies }\ \ d(v_{t+1,i})\ge
 d(v_{t+1,j})\end{eqnarray}
 for $
d(v_{t1})+\cdots+d(v_{t, r-1})-(r-1)+1 \le i<j\le d(v_{t1})+
\cdots+d(v_{t, r})-r. $ In this way, we have relabeled all vertices
of $V(T)=\bigcup_{i=0}^{p+1}V_i$. Therefore, we are able to define a
well-ordering of vertices in $V(T)$ as follows: \begin{equation}
\label{wellordering}
 v_{ik}\prec v_{jl}, \ \ {\rm if}\  0\le i<
j\le p+1 \ \ {\rm or}\ \ i=j \ {\rm and }\ 1\le  k<l\le s_i.
\end{equation}

We need to prove that this well-ordering is a BFS-ordering of  $T$.
In other words, $T$ is isomorphic to $T_{\pi}^*$.

We first prove, for $t=0, \cdots, p+1$, the following
inequalities.
 \begin{equation}\label{mainine1}
f_T(v_{t1})\ge f_T(v_{t2})\ge \cdots\ge f_T(v_{t, s_t})\ge
f_T(v_{t+1,1})
\end{equation} and
\begin{equation}
\label{mainine2} d(v_{t1})\ge d(v_{t2})\ge \cdots \ge
d(v_{t,s_t})\ge d(v_{t+1,1}).
\end{equation}

For any two vertices $v_{ti}$ and $v_{tj}$ with $1\le i<j\le s_t$,
there exists a path $P=v_{ti}\cdots v_{k+1, l}$ $w_kv_{k+1,r}\cdots
v_{tj}$ with $l<r$, where $dist(v_{ti}, v_{01})=dist_(v_{tj},
v_{01})=dist(w_k,v_{01})+t-k$.  Then we have $f_T(v_{k+1,l})\ge
f_T(v_{k+1,r})$, $f_T(v_{ti})\ge f_T(v_{tj})$ and $d(v_{ti})\ge
d(v_{tj})$ by Corollary~\ref{corollary3-12}. On the other hand, we
consider the path $Q=v_{t+1,1}v_{t1}\cdots
v_{11}v_{01}v_{1s_1}\cdots v_{t,s_t}$. Then $f_T(v_{t, s_t})\ge
f_T(v_{t+1,1})$ and $d(v_{t,s_t})\ge d(v_{t+1,1})$ by
Corollary~\ref{corollary3-12}. Therefore (\ref{mainine1}) and
(\ref{mainine2}) hold for $t=0, \cdots, p+1$. That is
\begin{equation}\label{allordering} f_T(v_{01})\ge f_T(v_{11})\ge \cdots
\ge f_T(v_{1,s_1})\ge f_T(v_{21})\ge \cdots \ge
f_T(v_{2,s_2})\ge\cdots\ge f_T(v_{p+1, s_{p+1}})\end{equation}
 and
\begin{equation} \label{degeeorder}d(v_{01})\ge d(v_{11})\ge
d(v_{1,s_1})\ge d(v_{21})\ge \cdots\ge d(v_{2,s_2})\ge
d(v_{p+1,1})\ge d(v_{p+1, s_{p+1}}).
\end{equation}

By (\ref{wellordering}), (\ref{allordering}) and (\ref{degeeorder}),
it is easy to see that this well ordering satisfies all conditions
in Definition~\ref{def2-1}.
 Hence
 $T$ has a BFS-ordering. Further, by Proposition~2.2 in \cite{zhang2008}, $T$ is
isomorphic to $T_{\pi}^*$. So $T_{\pi}^*$ is the unique optimal tree
in ${\mathcal T}_{ \pi}$ having the largest number of subtrees.
\end{proof}

\begin{proof} of Theorem~\ref{theorem2-2}.
By proposition~\ref{prop}, without loss of generality, we assume
that $\pi=(d_0,d_1,\cdots,d_i,\cdots, d_j,\cdots ,d_{n-1})$ and
$\pi_1=(d_0,d_1,\cdots,d_i+1,\cdots, d_j-1,\cdots, d_{n-1})$ with
$i<j$, then we have $\pi\triangleleft \pi_1$. Let $T_{\pi}^*$ be the
optimal tree in $\mathcal{T}_{\pi}$. By the proof of
Theorem~\ref{theorem2-1}, the vertices of $T_{\pi}^*$ can be labeled
as the $V=\{v_0, \cdots, v_{n-1}\}$ such that
$$f_{T_{\pi}^*}(v_0)\ge  f_{T_{\pi}^*}(v_1)\ge\cdots\ge
f_{T_{\pi}^*}(v_{n-1})$$ and
$$d(v_0)\ge d(v_1)\ge\cdots\ge d(v_{n-1}),$$
where $d(v_l)=d_l$ for $l=0, \cdots, n-1$. Moreover, $v_0$ is the
root of $T_{\pi}^*$. There exists a vertex $v_k$ such that
$v_jv_k\in E(T_{\pi}^*)$ with $k>j$. Let $W$ be the tree achieved
from $T_{\pi}^*$ by removing the subtree induced by $v_k$. Moreover,
let $X$ be the single vertex $v_i$ and $Y$ be the subtree induced by $v_k$
with the edge $v_jv_k$ added, respectively.  Clearly,
$f_T(v_i)=f_W(v_i)+f_W(v_i, v_j)(f_Y(v_j)-1)$ and
$f_T(v_j)=f_W(v_j)+f_W(v_j)(f_Y(v_j)-1)$. Hence by $f_W(v_i,
v_j)<f_W(v_j)$ and $f_T(v_i)\ge f_T(v_j)$, we have
$f_W(v_i)>f_W(v_j)$. On the other hand, let $T_1$ be the tree from
$T$ by deleting the edge $v_jv_k$ and adding the edge $v_iv_k$. Then
the degree sequence of $T_1$ is $\pi_1$. By Lemma~\ref{lemma3-2}, we
have $ \varphi (T_{\pi}^*)<\varphi(T_1)$. Hence $\varphi
(T_{\pi}^*)<\varphi(T_1)\le \varphi(T_{\pi_1}^*).$

The assertion is then proved.
\end{proof}

\section{Applications of the Main Theorems}

In the end we use  Theorems~\ref{theorem2-1} and \ref{theorem2-2} to achieve
extremal graphs with the largest  number of subtrees  in some classes
of graphs. As corollaries, we provide proofs to some results in \cite{Szelely2005},
\cite{kirk2008}, etc.

Let ${\mathcal T}_{n,\Delta}^{(1)}$ be the
set of all trees of order $n$ with the largest degree $\Delta$,
${\mathcal T}_{n, s}^{(2)}$ be the set of all trees of order $n$
with $s$ leaves,   ${\mathcal T}_{n,\alpha}^{(3)}$ be the set of all
trees of order $n$ with the independence number $\alpha$ and
${\mathcal T}_{n,\beta}^{(4)}$ be the set of all trees of order $n$
with the matching number $\beta$.

\begin{corollary}(\cite{Szelely2005})
Let $T$ be any tree of order $n$. Then $$ \left(\begin{array}{c}
n+1\\ 2\end{array}\right) \le \varphi (T)\le 2^{n-1}+n-1$$ with left
equality if and only if $T$ is a path of order $n$ and the right
equality if and only if $T$ is the star $K_{1, n-1}.$
\end{corollary}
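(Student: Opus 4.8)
The idea is to locate the two degree sequences that are extremal for the majorization order among all tree degree sequences of order $n$, feed them into Theorems~\ref{theorem2-1} and \ref{theorem2-2} to get the upper bound, and handle the lower bound by a short separate induction.

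The two candidates are the star sequence $\pi_\star=(n-1,1,\dots,1)$, realized only by $K_{1,n-1}$ (so $T^*_{\pi_\star}\cong K_{1,n-1}$), and the path sequence $\pi_P=(2,\dots,2,1,1)$ with $n-2$ entries equal to $2$, realized only by $P_n$ (so $T^*_{\pi_P}\cong P_n$). I would then check that every graphic tree degree sequence $\pi=(d_0,\dots,d_{n-1})$ of order $n$ satisfies $\pi_P\triangleleft\pi\triangleleft\pi_\star$. For $\pi\triangleleft\pi_\star$: the entries $d_{k+1},\dots,d_{n-1}$ are each at least $1$ and all $n$ entries sum to $2(n-1)$, so $\sum_{i=0}^k d_i\le 2(n-1)-(n-1-k)=(n-1)+k=\sum_{i=0}^k d^\star_i$. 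For $\pi_P\triangleleft\pi$: any tree on $n\ge 3$ vertices has at least two leaves, so $d_{n-1}=d_{n-2}=1$, which makes the $k=n-2$ and $k=n-1$ partial sums of $\pi$ equal to those of $\pi_P$; and for $0\le k\le n-3$, if $\sum_{i=0}^k d_i\le 2k+1$ then the remaining $n-1-k\ge 2$ entries would sum to at least $2(n-1-k)-1>n-1-k$, forcing some $d_j\ge 2$ with $j>k$, hence $d_0\ge\cdots\ge d_k\ge 2$ and $\sum_{i=0}^k d_i\ge 2(k+1)$ --- a contradiction. Also $\pi\ne\pi_P$ (resp.\ $\pi\ne\pi_\star$) forces a strict inequality in some partial sum, since otherwise all the $d_i$ would be determined.

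For the upper bound, let $T$ be any tree of order $n$, with degree sequence $\pi$. Theorem~\ref{theorem2-1} gives $\varphi(T)\le\varphi(T^*_\pi)$ with equality iff $T\cong T^*_\pi$. Since $\pi\triangleleft\pi_\star$, Theorem~\ref{theorem2-2} gives $\varphi(T^*_\pi)<\varphi(T^*_{\pi_\star})$ when $\pi\ne\pi_\star$, while $T^*_\pi\cong K_{1,n-1}$ when $\pi=\pi_\star$. Hence $\varphi(T)\le\varphi(K_{1,n-1})$ with equality iff $T\cong K_{1,n-1}$, and counting the subtrees of $K_{1,n-1}$ (the $n$ single vertices, and the centre together with any nonempty subset of the $n-1$ leaves) gives $\varphi(K_{1,n-1})=n+(2^{n-1}-1)=2^{n-1}+n-1$. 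For the lower bound I would induct on $n$. Pick any leaf $v$ of $T$ with neighbour $u$; since a subtree through $v$ is either $\{v\}$ or $\{v\}$ together with a subtree of $T-v$ through $u$, we get $\varphi(T)=\varphi(T-v)+1+f_{T-v}(u)$. I would first establish the auxiliary bound $f_S(w)\ge|V(S)|$ for every tree $S$ and vertex $w$, with equality iff $S$ is a path and $w$ an endpoint --- again by leaf deletion, using $\prod_i(1+m_i)\ge 1+\sum_i m_i$ (strict once two factors are present) in the case $\deg w\ge 2$. Together with the inductive hypothesis $\varphi(T-v)\ge\varphi(P_{n-1})$ this gives $\varphi(T)\ge\varphi(P_{n-1})+1+(n-1)=\varphi(P_{n-1})+n=\varphi(P_n)=\frac{n(n+1)}{2}$, and chasing the equality cases forces $T-v\cong P_{n-1}$ with $u$ an endpoint, i.e.\ $T\cong P_n$.

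The delicate point is the lower bound: Theorems~\ref{theorem2-1} and \ref{theorem2-2} only identify the \emph{maximizer} within a degree-sequence class and compare maximizers across classes, so they give no control over which tree of order $n$ has the \emph{fewest} subtrees; the path therefore has to be obtained by a separate argument such as the induction above (or by citing \cite{Szelely2005}). The rest --- verifying the two majorization extremes and evaluating the two closed forms --- is routine.
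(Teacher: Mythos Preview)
Your treatment of the upper bound is exactly the paper's: show that any tree degree sequence $\tau$ satisfies $\tau\triangleleft\pi_\star=(n-1,1,\dots,1)$, then apply Theorems~\ref{theorem2-1} and \ref{theorem2-2}. The paper's proof records only the chain $\pi_1\triangleleft\tau\triangleleft\pi_2$ and the citation of the two theorems, so on that half you match it verbatim (with more detail in the majorization check).

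Where you diverge is the lower bound, and you are right to do so. The paper's proof simply asserts that both inequalities follow from Theorems~\ref{theorem2-1} and \ref{theorem2-2}, but as you observe, those theorems only compare \emph{maximizers} $T^*_\pi$ across degree sequences: from $\pi_1\triangleleft\tau$ one gets $\varphi(P_n)=\varphi(T^*_{\pi_1})\le\varphi(T^*_\tau)$, which says nothing about an arbitrary $T\in\mathcal T_\tau$ below $T^*_\tau$. The paper is presumably leaning on the citation to \cite{Szelely2005} for the lower half, but the written proof does not make this explicit. Your leaf-deletion induction, together with the auxiliary bound $f_S(w)\ge|V(S)|$ (with equality exactly when $S$ is a path and $w$ an endpoint), is correct and self-contained; the identity $\varphi(T)=\varphi(T-v)+1+f_{T-v}(u)$ and the step $\binom{n}{2}+n=\binom{n+1}{2}$ close the argument cleanly, and the equality analysis is sound. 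So your proof is genuinely more complete than the paper's on this point.
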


\begin{proof}
Let $T$ be a tree of order $n$ with degree sequence $\tau$. Let
$\pi_1=(2, \cdots, 2, 1, 1)$ and $\pi_2=(n-1, 1, \cdots, 1)$ with
$n$ terms. Clearly the path $P$ of order $n$ is the only tree with
the degree sequence $\pi_1$  and the star $K_{1, n-1}$ of order $n$
is the only tree with degree sequence $\pi_2$. Furthermore,
$\pi_1\triangleleft \tau \triangleleft \pi_2$. Hence by
Theorems~\ref{theorem2-1} and \ref{theorem2-2}, the assertion holds.
\end{proof}
\begin{corollary}\label{maxdegree}(\cite{kirk2008})
 There is only one optimal tree $T_{\Delta}^*$ in ${\mathcal
T}_{n,\Delta}^{(1)}$ with $\Delta\ge 3$,  where $T_{\Delta}^*$ is
$T_{\pi}^*$  with degree sequence $\pi$  as follows:
 Denote
$p=\lceil\log_{(\Delta-1)}\frac{n(\Delta-2)+2}{\Delta}\rceil-1$ and
$n-\frac{\Delta(\Delta-1)^p-2}{\Delta-2}=(\Delta-1)r+q$ for $0\le
q<\Delta-1$. If $q=0$, put $\pi=(\Delta,\cdots,\Delta,1, \cdots,1)$
with the number $\frac{\Delta(\Delta-1)^{p-1}-2}{\Delta-2}+r$ of
degree $\Delta$. If $ q\ge 1$, put $\pi=(\Delta,\cdots,\Delta,q,
1,\cdots,1)$ with the number
$\frac{\Delta(\Delta-1)^{p-1}-2}{\Delta-2}+r$ of degree $\Delta$.
\end{corollary}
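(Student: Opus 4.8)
The plan is to derive this corollary from Theorems~\ref{theorem2-1} and \ref{theorem2-2} by turning it into a majorization question about degree sequences. First I would note that if $T$ is an optimal tree in ${\mathcal T}_{n,\Delta}^{(1)}$ and $\pi=(d_0,\dots,d_{n-1})$ is its degree sequence, then $\pi$ has $n$ entries, sum $2n-2$, and largest entry $d_0=\Delta$; since $T\in{\mathcal T}_\pi$, Theorem~\ref{theorem2-1} already forces $T\cong T_\pi^*$. So the whole question becomes: among all degree sequences of trees of order $n$ whose largest entry is $\Delta$, which one maximizes $\varphi(T_\pi^*)$? By Theorem~\ref{theorem2-2}, $\pi\triangleleft\pi'$ implies $\varphi(T_\pi^*)<\varphi(T_{\pi'}^*)$, so it is enough to find the $\triangleleft$-largest such sequence.

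The key step is to show this family of degree sequences has a \emph{unique} $\triangleleft$-maximum, namely the ``greedy'' sequence $\pi^*$ that uses as many entries equal to $\Delta$ as possible. Concretely I would set $A=\lfloor (n-2)/(\Delta-1)\rfloor$ and $q^*=n-1-A(\Delta-1)$, check (using $\Delta\ge3$ and $n\ge\Delta+1$, the latter holding because ${\mathcal T}_{n,\Delta}^{(1)}\neq\emptyset$) that $1\le q^*\le\Delta-1$ and that $\pi^*=(\Delta,\dots,\Delta,q^*,1,\dots,1)$ with $A$ copies of $\Delta$ (the entry $q^*$ being merged into the $1$'s when $q^*=1$) is a genuine tree degree sequence of order $n$ with largest entry $\Delta$. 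The crux is the inequality $\sum_{i=0}^k d_i\le\sum_{i=0}^k d_i^*$ for every admissible $\pi$ and every $k$: for $k<A$ the right side is $(k+1)\Delta$, and $d_i\le\Delta$ gives it at once; for $k\ge A$ the right side equals $n-1+k$, while any admissible $\pi$ satisfies $\sum_{i=0}^k d_i=2n-2-\sum_{i=k+1}^{n-1}d_i\le n-1+k$ because its remaining $n-1-k$ entries are each at least $1$. Hence $\pi\triangleleft\pi^*$ for every admissible $\pi\neq\pi^*$, so by the previous paragraph $T_{\pi^*}^*$ is the unique optimal tree in ${\mathcal T}_{n,\Delta}^{(1)}$.

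What is then left is the bookkeeping that identifies $\pi^*$ with the sequence written in the statement. Writing $N_h=\frac{\Delta(\Delta-1)^h-2}{\Delta-2}$ for the order of the complete $\Delta$-ary tree of depth $h$ (root of degree $\Delta$, all internal vertices of degree $\Delta$), one sees that $p=\lceil\log_{\Delta-1}\frac{n(\Delta-2)+2}{\Delta}\rceil-1$ is the largest depth with $N_p<n$; the $N_{p-1}=\frac{\Delta(\Delta-1)^{p-1}-2}{\Delta-2}$ vertices at levels $0,\dots,p-1$ all have degree $\Delta$, and the equation $n-N_p=(\Delta-1)r+q$ records that exactly $r$ of the level-$p$ vertices get completed to degree $\Delta$ while a remainder $q$ accounts for the single partially completed one. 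This gives $\frac{\Delta(\Delta-1)^{p-1}-2}{\Delta-2}+r=A$ vertices of degree $\Delta$, and the dichotomy $q=0$ versus $q\ge1$ is exactly the case split in the statement, so $T_\Delta^*=T_{\pi^*}^*$.

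I expect the main obstacle to be the majorization comparison in the middle paragraph, i.e.\ proving that the greedy degree sequence dominates every admissible competitor in the $\triangleleft$-order; the rest is either a direct invocation of Theorems~\ref{theorem2-1} and \ref{theorem2-2} or the elementary (if slightly fussy, around the boundary cases $q^*=1$ and $n=N_h$) translation into the iterated-logarithm parametrization of the statement.
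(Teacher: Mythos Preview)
Your proposal is correct and follows essentially the same route as the paper's proof: both identify the ``greedy'' degree sequence with as many $\Delta$'s as possible, prove it majorizes every competitor via the identical two-case argument (for small $k$ use $d_i\le\Delta$; for large $k$ use the complementary sum $\sum_{i=0}^k d_i=2(n-1)-\sum_{i>k}d_i$ together with $d_i\ge 1$), and then invoke Theorems~\ref{theorem2-1} and~\ref{theorem2-2}. The only organizational difference is that the paper first derives the layer-count parametrization $(p,r,q)$ and then proves majorization, whereas you prove majorization with the cleaner parameters $(A,q^*)$ and relegate the identification with $(p,r,q)$ to bookkeeping at the end.
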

\begin{proof}
For any tree $T$ of order $n$ with  the largest degree $\Delta$, let
$\pi_1=(d_0,\cdots, d_{n-1})$ be the nonincreasing degree sequence
of $T$. Assume that $T_{\Delta}^*$ has $p+2$ layers. Then there is a
vertex in layer 0 (i.e. the root), there are $\Delta$ vertices in layer
1, there are $\Delta(\Delta-1)$ vertices in layer 2, $\cdots$, there
are $\Delta(\Delta-1)^{p-1}$ vertices in layer $p$, there are at
most $\Delta(\Delta-1)^p$ vertices in layer $p+1$. Hence
\begin{eqnarray*}1+\Delta+\Delta(\Delta-1)+\cdots+\Delta(\Delta-1)^{p-1}< n\le
1+\Delta+\Delta(\Delta-1)+\cdots+\Delta(\Delta-1)^{p}.\end{eqnarray*}
  Thus \begin{eqnarray*}
\frac{\Delta(\Delta-1)^p-2}{\Delta-2}<n\le
\frac{\Delta(\Delta-1)^{p+1}-2}{\Delta-2}.\end{eqnarray*} Hence
\begin{eqnarray*}p=\lceil\log_{(\Delta-1)}\frac{n(\Delta-2)+2}{\Delta}\rceil-1\end{eqnarray*}
and there exist integers $r$ and $0\le q<\Delta-1$ such that
\begin{eqnarray*}n-\frac{\Delta(\Delta-1)^p-2}{\Delta-2}=(\Delta-1)r+q.\end{eqnarray*}
Therefore the degrees of all vertices from layer 0 to layer $p-1$
are $\Delta$ and there are $r$ vertices in layer $p$ with degree
$\Delta$.  Denote by
$m=\frac{\Delta(\Delta-1)^{p-1}-2}{\Delta-2}+r-1$. Then there are
$m+1$ vertices with degree $\Delta$ in $T_{\Delta}^*$. Hence  the
degree sequence of $T_{\Delta}^*\in {\mathcal T}_{n,\Delta}$  is
$\pi=(d_0^{\prime},\cdots, d_{n-1}^{\prime})$ with
$d_0^{\prime}=\cdots= d_m^{\prime}=\Delta$,
$d_{m+1}^{\prime}=\cdots=d_{n-1}^{\prime}=1$ for $q=0$; and is
$\pi
=(d_0^{\prime},\cdots, d_{n-1}^{\prime})$ with
$d_0^{\prime}=\cdots= d_m^{\prime}=\Delta$, $d_{m+1}^{\prime}=q,$
$d_{m+2}^{\prime}=\cdots=d_{n-1}^{\prime}=1$ for $q=1$.
 It follows
from $d_i\le \Delta$ that $\sum_{i=0}^kd_i\le
\sum_{i=0}^kd_i^{\prime}$ for $k=0,\cdots, m$. Further by
$d_i^{\prime}=1\le d_i $ for $k=m+2,\cdots, n-1$,  we have
\begin{eqnarray*}
\sum_{i=0}^kd_i= 2(n-1)-\sum_{i=k+1}^{n-1}d_i\le
2(n-1)-\sum_{i=k+1}^{n-1}d_i^{\prime}=\sum_{i=0}^kd_i^{\prime}\end{eqnarray*}
 for $k=m+1,
\cdots n-1$. Thus $\pi_1\triangleleft\pi$. Hence by
Theorems~\ref{theorem2-1} and \ref{theorem2-2},  $\varphi(T)\le
\varphi(T_{\Delta}^*)$ with equality if and only if
$T=T_{\Delta}^*$.
\end{proof}

\noindent {\bf Remark} If $\Delta=3$ in Corollary~\ref{maxdegree}, then the
result is precisely Theorem 2.1 in \cite{Szelely2007}.

\begin{corollary}\label{leaves}
There is only one optimal  tree $T_s^*$ in  ${\mathcal T}_{n,
s}^{(2)}$  where $T_s^*$ is obtained from $t$ paths of order $q+2$ and
$s-t$ paths of order $q+1$ by identifying one end of the $s$
paths. Here $n-1=sq+t,
0\le t<s$. In other words, for any tree of order $n$ with $s$ leaves,
$$\varphi(T)\le 
(q+2)^t+(q+1)^{s-t+2}$$
with equality if and only if $T$
is $T_s^*$.
\end{corollary}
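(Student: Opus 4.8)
The plan is to reduce the statement to Theorems~\ref{theorem2-1} and \ref{theorem2-2} by identifying the correct extremal degree sequence. Let $T$ be a tree of order $n$ with exactly $s$ leaves, and let $\tau=(d_0,\dots,d_{n-1})$ be its degree sequence in nonincreasing order, so that $d_{n-s}=\cdots=d_{n-1}=1$ while $d_i\ge 2$ for $i\le n-s-1$. Put $\pi=(s,2,\dots,2,1,\dots,1)$, the sequence with one entry $s$, then $n-s-1$ entries equal to $2$, then $s$ entries equal to $1$; its entries sum to $2(n-1)$. I would first observe that $\pi$ is exactly the degree sequence of the tree $T_s^*$ in the statement: its centre has degree $s$, every other non-leaf vertex is interior to one of the $s$ legs and hence has degree $2$, and there are $s$ leaves, which (using $n-1=sq+t$) accounts for all $n$ vertices. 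Moreover $T_\pi^*$ coincides with $T_s^*$: since every non-root degree in $\pi$ is $1$ or $2$, each vertex of $T_\pi^*$ in layers $\ge 1$ has at most one child, so $T_\pi^*$ is a spider; filling its layers breadth-first makes the legs as balanced as the multiset of degrees allows, namely $t$ legs with $q+1$ non-centre vertices and $s-t$ legs with $q$ non-centre vertices, which is precisely $T_s^*$.

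The core step is to prove $\tau\triangleleft\pi$, that is, $\sum_{i=0}^{k}d_i\le\sum_{i=0}^{k}\pi_i$ for $0\le k\le n-2$; since both sequences sum to $2(n-1)$, this is equivalent to $\sum_{i=k+1}^{n-1}d_i\ge\sum_{i=k+1}^{n-1}\pi_i$ for every $k$. For $k\ge n-s-1$ the right side equals $n-1-k$, and the left side is at least $n-1-k$ because every degree is at least $1$. For $0\le k\le n-s-1$ one computes $\sum_{i=k+1}^{n-1}\pi_i=2(n-1-k)-s$, and I would bound the left side by the following bookkeeping: in this range $n-1-k\ge s$, so the $n-1-k$ smallest degrees of $T$ comprise all $s$ leaves (each contributing $1$) together with $n-1-k-s$ internal vertices (each contributing at least $2$), whence $\sum_{i=k+1}^{n-1}d_i\ge s+2(n-1-k-s)=2(n-1-k)-s$. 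Taking $k=0$ here in particular re-proves that the maximum degree of $T$ is at most $s$, which one can also see directly since deleting a vertex of degree $\Delta$ leaves $\Delta$ components, each containing a leaf of $T$. Hence $\tau\triangleleft\pi$.

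Now the corollary follows quickly. If $\tau=\pi$, Theorem~\ref{theorem2-1} gives $\varphi(T)\le\varphi(T_\pi^*)=\varphi(T_s^*)$, with equality iff $T\cong T_s^*$. If $\tau\ne\pi$, then $\tau\triangleleft\pi$ strictly, so Theorem~\ref{theorem2-1} followed by Theorem~\ref{theorem2-2} gives $\varphi(T)\le\varphi(T_\tau^*)<\varphi(T_\pi^*)=\varphi(T_s^*)$. Thus $T_s^*$ is the unique tree in ${\mathcal T}_{n,s}^{(2)}$ maximizing the number of subtrees. To get the explicit value $\varphi(T_s^*)$, I would count subtrees according to whether they contain the centre $c$: a subtree containing $c$ meets each leg in an initial segment, and these $s$ choices are independent, contributing $\prod_{j=1}^{s}(\ell_j+1)$ subtrees, where $\ell_j$ is the number of non-centre vertices on leg $j$; a subtree avoiding $c$ is a subpath of a single leg, contributing $\sum_{j=1}^{s}\ell_j(\ell_j+1)/2$ subtrees. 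Substituting $\ell_j=q+1$ for the $t$ long legs and $\ell_j=q$ for the remaining $s-t$ legs yields the stated closed form.

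The main obstacle is the majorization $\tau\triangleleft\pi$; the rest is a direct invocation of Theorems~\ref{theorem2-1} and \ref{theorem2-2} plus a routine count. The one point that needs care is ensuring, for $k\le n-s-1$, that all $s$ leaves really lie among the $n-1-k$ smallest degrees — this is exactly where the inequality $n-1-k\ge s$ is used — so that splitting those degrees into "$s$ ones and the rest internal" is legitimate.
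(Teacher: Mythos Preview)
Your approach is the same as the paper's: set $\pi=(s,2,\dots,2,1,\dots,1)$, show that the degree sequence of any tree with exactly $s$ leaves is majorized by $\pi$, and invoke Theorems~\ref{theorem2-1} and~\ref{theorem2-2}; you simply supply the majorization argument and the identification $T_\pi^*\cong T_s^*$ explicitly where the paper only writes ``it is easy to see.'' One caveat: your subtree count for the spider is correct, but carrying out the substitution gives $(q+2)^t(q+1)^{s-t}+t\binom{q+2}{2}+(s-t)\binom{q+1}{2}$, not the displayed expression in the corollary, which is a misprint (test it on the star, $n=s+1$).
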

\begin{proof}
Let $T$ be any tree in ${\mathcal T}_{n, s}^{(2)}$ with the
nonincreasing degree sequence $\pi_1=(d_0,\cdots,d_{n-1})$. Thus
$d_{n-s-1}>1$ and $d_{n-s}=\cdots=d_{n-1}=1$. Let $T_{\pi}^*$ be a
BFS-tree with degree sequence $\pi=(s, 2,\cdots, 2,
1,\cdots, 1)$, where there are the number $s$ of 1$'$s in $\pi$. It is easy to see that
$\pi_1\triangleleft\pi$. By Theorem~\ref{theorem2-2}, the assertion
holds.
\end{proof}

\begin{corollary}\label{inde}
There is only one optimal tree $T_{\alpha}^*$ in  ${\mathcal
T}_{n,\alpha}^{(3)}$,  where $T_\alpha^*$  is
$T_{\pi}^*$ with degree sequence $\pi=(\alpha,
2,\cdots,2,1,\cdots,1)$ with numbers $n-\alpha-1$ of 2$'$s and  $\alpha$
of 1$'$s, i.e., $T_{\pi}^*$ is obtained from the star $K_{1, \alpha}$ by
adding $n-\alpha-1$ pendent edges to $n-\alpha-1$ leaves of $K_{1, \alpha}$. In other words, for any tree of order $n$
with the independence number $\alpha$,
$$\varphi(T)\le 
2^{2\alpha-n+1}3^{n-\alpha-1}+2n-\alpha-2$$ with equality if and
only if $T$ is $T_{\alpha}^*$.
\end{corollary}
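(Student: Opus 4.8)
The plan is to proceed exactly as in the proofs of Corollaries~\ref{maxdegree} and \ref{leaves}: reduce the statement to a majorization between degree sequences and then invoke Theorems~\ref{theorem2-1} and \ref{theorem2-2}. Concretely, I set $\pi=(\alpha,2,\dots,2,1,\dots,1)$ with $n-\alpha-1$ twos and $\alpha$ ones, and I aim to show that for every $T\in\mathcal{T}_{n,\alpha}^{(3)}$ with nonincreasing degree sequence $\pi_1=(d_0,\dots,d_{n-1})$ one has $\pi_1\triangleleft\pi$.

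First I would record the basic facts about $\pi$ and $T_\pi^*$. The entries of $\pi$ sum to $\alpha+2(n-\alpha-1)+\alpha=2(n-1)$, and since every tree of order $n$ satisfies $\alpha\ge n/2$ we have $n-\alpha-1<\alpha$, so $\pi$ is a nonincreasing tree degree sequence and the BFS-construction of Section~2 applied to $\pi$ yields precisely the tree $T_\alpha^*$: a center of degree $\alpha$ carrying $2\alpha-n+1$ pendant edges and $n-\alpha-1$ pendant paths of length two. I would then check that $T_\alpha^*\in\mathcal{T}_{n,\alpha}^{(3)}$: one bipartition class has size $(n-\alpha-1)+(2\alpha-n+1)=\alpha$, so the independence number is at least $\alpha$, while a two-case argument on whether an independent set $S$ contains the center ($|S|\le 1+(n-\alpha-1)=n-\alpha\le\alpha$ if it does, $|S|\le (n-\alpha-1)+(2\alpha-n+1)=\alpha$ if it does not) gives the matching upper bound, so $T_\alpha^*$ has independence number exactly $\alpha$.

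The main step — and the only genuine obstacle — is the majorization. Two elementary properties of trees carry it. Since a common neighbour of two adjacent vertices would close a triangle, the neighbourhood of any vertex is an independent set, hence $d_0=\Delta\le\alpha$. And for $n\ge 3$ no two leaves can be adjacent, so the set of leaves is independent and $L\le\alpha$, where $L$ denotes the number of leaves of $T$. Now $\sum_i d_i=2(n-1)$ gives $\sum_{d_i\ge 3}(d_i-2)=L-2$, and because $\pi_1$ is sorted,
\[
\sum_{i=0}^{k}d_i=2(k+1)+\sum_{i=0}^{k}(d_i-2)\le 2(k+1)+\sum_{d_i\ge 2}(d_i-2)=2k+L\le \alpha+2k
\]
for every $k$ (dropping the negative summands coming from leaves and then adding the nonnegative ones of larger index). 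For $0\le k\le n-\alpha-1$ the right-hand side $\alpha+2k$ is exactly the $k$-th partial sum of $\pi$; for $n-\alpha-1\le k\le n-2$ the bound $\sum_{i=0}^{k}d_i\le n-1+k$ holds automatically (the remaining $n-1-k$ entries are each at least $1$), and $n-1+k$ is the $k$-th partial sum of $\pi$ in that range. As both sequences have total sum $2(n-1)$, this proves $\pi_1\triangleleft\pi$.

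Finally I would assemble the conclusion. By Theorem~\ref{theorem2-1}, $\varphi(T)\le\varphi(T_{\pi_1}^*)$ with equality iff $T=T_{\pi_1}^*$; and by Theorem~\ref{theorem2-2}, if $\pi_1\ne\pi$ then $\varphi(T_{\pi_1}^*)<\varphi(T_\pi^*)=\varphi(T_\alpha^*)$. Hence $T_\alpha^*$ is the unique optimal tree in $\mathcal{T}_{n,\alpha}^{(3)}$. The explicit value follows from a direct count on $T_\alpha^*$: the subtrees containing the center contribute $2^{2\alpha-n+1}3^{n-\alpha-1}$ (two options per pendant edge, three per pendant path), while the subtrees avoiding the center lie inside a single component of $T_\alpha^*$ minus its center and contribute $3(n-\alpha-1)+(2\alpha-n+1)=2n-\alpha-2$, for a total of $2^{2\alpha-n+1}3^{n-\alpha-1}+2n-\alpha-2$.
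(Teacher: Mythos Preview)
Your proof is correct and follows the same route as the paper: bound the number of leaves by $\alpha$, deduce $\pi_1\triangleleft\pi$, and invoke Theorems~\ref{theorem2-1} and~\ref{theorem2-2}. You supply more detail than the paper does---verifying $T_\alpha^*\in\mathcal{T}_{n,\alpha}^{(3)}$, spelling out the partial-sum inequalities, and deriving the closed form---and your observation that the leaves themselves form an independent set (for $n\ge 3$) is a shade simpler than the paper's swapping argument to the same end.
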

\begin{proof}
For any tree $T$ of order $n$ with  the independence number
$\alpha$, let $I$ be an independent set of $T$ with size $\alpha$ and $\tau=(d_0,\cdots, d_{n-1})$ be the
degree sequence of $T$.  If there exists a  leaf $u$ with $u\notin I$, then there exists a vertex
$v\in I$  with $(u,v)\in E(T)$. Hence $I\bigcup \{u\}\setminus\{v\}$
is an independent set of $T$ with size $\alpha$. Therefore,
one can always construct an independent set of $T$ with size $\alpha$ that contains
all leaves of $T$. Hence there are at most $\alpha$
leaves.  Then $d_{n-\alpha-1}\ge 2$ and $\tau\triangleleft
\pi$. By Theorems~\ref{theorem2-1} and \ref{theorem2-2},
the assertion holds.
\end{proof}

\begin{corollary}\label{match}
There is only one optimal tree $T_{\beta}^*$ in ${\mathcal
T}_{n,\beta}^{(4)}$, where $T_{\beta}^*$ is $T_{\pi}^*$
 with degree sequence $\pi=(n-\beta, 2,\cdots,2,1,\cdots,1)$. Here the number of 1$'$s is $n-\beta$.
That is, $T_{\pi}^*$ is obtained from the
star $K_{1, n-\beta}$ by adding $\beta-1$ pendent edges to
$\beta-1$ leaves of $K_{1, n-\beta}$. In other words, for any
$T \in {\mathcal T}_{n,\beta}^{(4)}$,
$$\varphi(T)\le 
2^{n-2\beta+1}3^{\beta-1}+n-\beta-2$$ with equality if and only if
$T$ is $T_{\beta}^*$.
\end{corollary}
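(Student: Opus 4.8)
The plan is to obtain Corollary~\ref{match} as an immediate specialization of Corollary~\ref{inde}, via the classical identity relating the matching number and the independence number of a tree. Every tree $T$ of order $n\ge 2$ is bipartite and has no isolated vertex, so König's theorem (minimum vertex cover equals maximum matching) together with Gallai's identity (the complement of a maximum independent set is a minimum vertex cover) gives $\alpha(T)+\beta(T)=n$. Hence ${\mathcal T}_{n,\beta}^{(4)}={\mathcal T}_{n,n-\beta}^{(3)}$ as families of trees, and putting $\alpha=n-\beta$ in Corollary~\ref{inde} shows at once that the unique optimal tree in ${\mathcal T}_{n,\beta}^{(4)}$ is $T_\pi^*$ for $\pi=(n-\beta,2,\dots,2,1,\dots,1)$ with $n-(n-\beta)-1=\beta-1$ entries equal to $2$ and $n-\beta$ entries equal to $1$; that is, the star $K_{1,n-\beta}$ with a pendant edge attached to $\beta-1$ of its leaves. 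In particular $T_\pi^*$ already lies in ${\mathcal T}_{n,\beta}^{(4)}$, because Corollary~\ref{inde} places it in ${\mathcal T}_{n,n-\beta}^{(3)}$, so its matching number need not be checked separately.

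It then remains only to record $\varphi(T_\pi^*)$, by the same two-part count used for Corollary~\ref{inde}. Writing $c$ for the center: a subtree containing $c$ is obtained by choosing, independently, one of three options on each of the $\beta-1$ length-two branches at $c$ and one of two options on each of the $n-2\beta+1$ pendant neighbours of $c$, giving $3^{\beta-1}2^{n-2\beta+1}$; a subtree avoiding $c$ lies in one component of $T_\pi^*-c$, and there are $\beta-1$ components that are paths on two vertices ($3$ nonempty subtrees each) and $n-2\beta+1$ isolated vertices ($1$ each), giving $3(\beta-1)+(n-2\beta+1)=n+\beta-2$. Thus $\varphi(T)\le 2^{n-2\beta+1}3^{\beta-1}+n+\beta-2$ for every $T\in{\mathcal T}_{n,\beta}^{(4)}$, and the equality case is the standard one: if $T$ has degree sequence $\tau$ then $\varphi(T)\le\varphi(T_\tau^*)\le\varphi(T_\pi^*)$ by Theorems~\ref{theorem2-1} and~\ref{theorem2-2}, with the second inequality strict unless $\tau=\pi$, so equality throughout forces $T=T_\pi^*$.

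I do not expect a serious obstacle here: the whole argument is essentially the reduction ${\mathcal T}_{n,\beta}^{(4)}={\mathcal T}_{n,n-\beta}^{(3)}$ followed by Corollary~\ref{inde}. If one prefers to argue from scratch, one repeats the majorization step directly: a maximum matching of size $\beta$ provides $\beta$ pairwise-disjoint edges, each (for $n\ge 3$) having a non-leaf endpoint, since a two-leaf edge would be a $K_2$ component, so $T$ has at least $\beta$ internal vertices and at most $n-\beta$ leaves, whence $d_{n-\beta-1}\ge 2$; combined with the bound $\Delta(T)\le n-\beta$ on the largest degree and with $\sum_i d_i=2(n-1)$, the partial-sum inequalities for $\tau\triangleleft\pi$ follow exactly as in Corollary~\ref{inde}. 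The only mildly delicate points along that route --- both automatic under the reduction --- are establishing $\Delta(T)\le n-\beta$ and verifying that $T_\pi^*$ itself has matching number exactly $\beta$, which uses $n\ge 2\beta$ (forced, since every tree of order $n$ has matching number at most $\lfloor n/2\rfloor$): for $n\ge 2\beta$ the $\beta-1$ pendant edges together with one center--leaf edge form a matching of size $\beta$, and a short case check on how a maximum matching can meet the center shows $\beta$ cannot be exceeded.
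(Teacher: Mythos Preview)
Your proof is correct and takes a different route from the paper's. The paper argues directly: from a maximum matching of size $\beta$ it observes (tersely) that a connected tree must have at least $\beta$ vertices of degree $\ge 2$, so the degree sequence $\tau$ satisfies $d_{\beta-1}\ge 2$, whence $\tau\triangleleft\pi$, and Theorems~\ref{theorem2-1} and~\ref{theorem2-2} finish. Your primary argument instead invokes the K\"onig--Gallai identity $\alpha(T)+\beta(T)=n$ to identify $\mathcal{T}_{n,\beta}^{(4)}$ with $\mathcal{T}_{n,n-\beta}^{(3)}$ and then reads everything off from Corollary~\ref{inde}. This is cleaner: it reuses the majorization already established there and automatically certifies that $T_\pi^*$ has matching number exactly $\beta$. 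The paper's direct route is self-contained but, as written, skips both the verification that $T_\pi^*\in\mathcal{T}_{n,\beta}^{(4)}$ and the first partial-sum inequality $\Delta(T)\le n-\beta$ needed for $\tau\triangleleft\pi$; your ``from scratch'' paragraph supplies both.

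One point worth flagging: your explicit count $\varphi(T_\pi^*)=2^{n-2\beta+1}3^{\beta-1}+n+\beta-2$ is the correct value and agrees with Corollary~\ref{inde} under $\alpha=n-\beta$ (since $2n-\alpha-2=n+\beta-2$). The displayed bound in the statement of Corollary~\ref{match} has $n-\beta-2$ in place of $n+\beta-2$, which appears to be a typographical slip; e.g.\ for $n=4$, $\beta=2$ the extremal tree is $P_4$ with $\varphi(P_4)=10=6+4$, not $6+0$.
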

\begin{proof}
For any tree $T$ of order $n$ with matching number $\beta$, let
$\tau=(d_0,\cdots, d_{n-1})$ be the degree sequence of
$T$. Let $M$ be a matching of $T$ with size $\beta$.
Since $T$ is connected, there are at least $\beta$ vertices in $T$
such that their degrees are at least two. Hence $d_{\beta-1}\ge 2$
and $\tau\triangleleft \pi$. By Theorems~\ref{theorem2-1}
and \ref{theorem2-2}, the assertion holds.
\end{proof}

\frenchspacing

\end{document}